\title[p-adic local dynamics] {Local analytic conjugacy of resonant analytic mappings in two variables, in the non-archimedean setting}
\subjclass[2000]{Primary 32P05}
\keywords{conjugacy, normal form, non-archimedean, p-adic, formal, analytic, holomorphic}
\author{ Adrian Jenkins \and Steven Spallone }
\address{Department of Mathematics, Kansas State University, Manhattan,
KS, 66506} \email{majenkin@math.purdue.edu}
\address{Department of Mathematics, University of Oklahoma, Norman,
OK, 73019} \email{sspallone@math.ou.edu}
\newtheorem{thm}{Theorem}[section]
\newtheorem{theorem}[thm]{Theorem}
\newtheorem{cor}[thm]{Corollary}
\newtheorem{prop}[thm]{Proposition}
\newtheorem{define}[thm]{Definition}
\newtheorem{lemma}[thm]{Lemma}
\newcommand{\nc}{\newcommand}
\nc{\W}{\ensuremath{\underline{\N}}}
\nc{\dmo}{\DeclareMathOperator}
\nc{\ul}{\underline}
\nc{\N}{\mathbb{N}}
\nc{\Q}{\mathbb{Q}}
\nc{\Z}{\mathbf{Z}}
\nc{\F}{\mathcal{F}}
\nc{\R}{\mathbf{R}}
\nc{\C}{\mathbf C}
\nc{\OK}{\mathcal{O}_0^K}
\nc{\eps}{\varepsilon}
\nc{\ra}{\rightarrow}
\nc{\gm}{\gamma}
\nc{\ol}{\overline}
\nc{\av}{\vec{a}}
\nc{\bv}{\vec{b}}
\nc{\cv}{\vec{c}}
\nc{\vv}{\vec{v}}
\nc{\gammav}{\vec{\gamma}}
\nc{\GG}{\mathfrak{G}}
\nc{\res}{res}
\nc{\Mod}{\text{ mod }}
\dmo{\Id}{Id}
\dmo{\SEMI}{semi}
\dmo{\REPEL}{rep}
\dmo{\ord}{ord}
\dmo{\diag}{diag}
\begin{document}

\begin{abstract}
In this note, we consider locally invertible analytic mappings in two dimensions, with coefficients in a non-archimedean field.  Suppose such a map has a Jacobian with eigenvalues $\lambda_1$ and $\lambda_2$ so that $|\lambda_1|>1$ and $\lambda_2$ is a positive power of $\lambda_1$, or that $\lambda_1=1$ and $|\lambda_2|\neq 1$.  We prove that two formal maps with eigenvalues satisfying either of these conditions are analytically equivalent if and only if they are formally equivalent.
\end{abstract}

\maketitle

\section{Introduction}\setcounter{equation}{0}\label{Introduction}

The basic questions of discrete dynamical systems surround the study of iterates of mappings defined in some set. For example, one can consider a global situation, where a set $S$ is fixed, and the goal is to understand the behavior of any map $f:S\rightarrow S$ on all points of $S$. In this situation, the particular set $S$ on which these maps are defined plays a crucial role in understanding their dynamics.

On the other hand, one can also study the {\it local} dynamics of maps. Rather than fixing a particular set $S$ and studying the maps which send $S$ to $S$, one can consider the dynamical properties of some class of maps in a sufficiently small neighborhood of an interesting dynamical point, e.g., a fixed point.  Let us say that $S$ is a subset of some vector space.  Since the theory is purely local, one often assumes that the fixed point is $0$, for convenience. In this setting, the actual neighborhood of $0$ on which such a map is defined is mostly irrelevant (although geometric considerations may still play a large role in the theory). For this reason, one often considers the study of the dynamics of ``germs" of mappings. 

As an example, consider the set of germs of analytic functions $f$ fixing the origin within ${\mathbf{C}}$. Such germs may be written in the form 
\begin{equation*}
f(z)=\sum_{n=m \geq 1}^{\infty }a_{n}z^{n}.
\end{equation*}
 
A natural goal would be the local reduction of such a mapping to a more ``suitable'' form $f_{0}$, which is easier to study, yet retains all of the dynamical properties of the original function $f$. This can be accomplished via a local change of variable, i.e. a map $h$ fixing $0$ which conjugates $f$ to $f_{0}$ within some suitably small neighborhood $U$ of $0$: $h\circ f\circ h^{-1}=f_{0}$. Note that if such an analytic map exists, then obviously we have $h\circ f^{\circ n}\circ h^{-1}=f_{0}^{\circ n}$, and so all dynamical properties of $f$ are preserved by $f_{0}$. Moreover, depending on the regularity of $f$, more subtle data can be gained (for example, if $h$ is analytic, then it will preserve invariant analytic curves, etc.). We do not give a full accounting of this theory (which is vast); the interested reader may, e.g., consider the survey article of Abate \cite{Abate}. Our interest here is to pursue the same questions in higher dimensions, and on different fields. In this note, we will consider the question of analytic equivalence of germs of analytic maps in two variables, within the non-archimedean context.

Before tackling analytic equivalence of germs of analytic mappings, it is often desirable to first determine the ``formal'' equivalence of such germs. Note that germs of analytic maps at $0$ can be expressed via power series with no constant term. If we restrict ourselves to the set $G$ of (locally) invertible analytic germs, then $G$ forms a group. In fact, it is a subgroup of the group of invertible {\it formal} power series. We can thus consider a weaker relation: two formal germs are called formally equivalent if they are conjugate in this larger group. The advantages of considering formal equivalence are numerous: while formal equivalence obviously does not guarantee analytic equivalence, it is easier to study, typically requiring only arithmetic operations. Because of this, formal theory within, e.g., ${\mathbf C^{n}}$ can be carried over to any field of characteristic $0$. Moreover, a robust theory exists, with many ``formal normal forms" (see Section \ref{Formal Theory} for some examples of these forms). The natural question thus arises: what can be said about two formally-equivalent germs $F$ and $G$?

The authors have turned to applying this formal theory in the nonarchimedean case.  The study of non-archimedean dynamics is an active area of research, encompassing both global and local results; see the works of Benedetto (e.g. \cite{Benedetto}) and Rivera-Letelier (e.g. \cite{Rivera-Letelier}). Rather than working in $\C$ or $\R$, one considers the arithmetically-defined field $\Q_p$ or an extension $K$.  These fields play a prominent role in number theory.  However, our interest here is not arithmetic, but rather the gentler analytic theory that such fields provide.  The norms on these fields satisfy the so-called ultrametric inequality 
\begin{equation*}
|x+y| \leq \max (|x|,|y|),
\end{equation*}
and as a consequence, a series $\sum a_n$ converges if and only if the terms $a_n$ approach $0$.  This makes convergence of power series particularly straightforward. 
 
Our project is meant as a continuation of the fundamental work of Hermann and Yoccoz \cite{Herman-Yoccoz}, which treats the case for which the formal normal forms are linear maps.  
We consider the other cases, which occur when the eigenvalues $\lambda_1, \ldots, \lambda_r$ of the Jacobian $DF_0$ of $F$ have special relations, referred to as resonances.  More precisely, a resonance is any relation of the form
\begin{equation}\label{resonance relation}
\lambda _{j}-\lambda_{1}^{i_{1}}\lambda_{2}^{i_{2}}\ldots \lambda _{r}^{i_{r}}=0
\end{equation}
where for $1 \leq k \leq r$,  the $i_{k} \geq 0$ are natural numbers and $\sum_{k}i_{k}\geq 2$.  These differences arise as denominators in attempting to construct the formal maps which conjugate $F$ to its linear part.  Herman and Yoccoz show that if the differences on the left-hand side of (\ref{resonance relation}) are sufficiently far from $0$, then the maps are analytically linearizable.  There are similar theorems in the complex case due to Siegel \cite{Siegel}, Bryuno \cite{Bryuno} and Yoccoz \cite{Yoccoz2}.  On the other hand one finds in \cite{Herman-Yoccoz} a wealth of examples of formally linearizable two-dimension maps which are not analytically linearizable. This is in stark contrast to the one-dimensional case, where formal equivalence is precisely the same as analytic equivalence (\cite{Herman-Yoccoz}, \cite{Jenkins-Spallone}, among others).  

In this paper we prove that for many cases of resonant two-dimensional maps, formal equivalence still implies analytic equivalence, and moreover, one can determine analyticity by measuring the decay of ``denominators'' in an elementary fashion. We focus on the case for which the Jacobian of $F$ has distinct eigenvalues.  By a linear change of coordinates, we may express a typical function (in $r$ variables) as
\begin{equation}\label{typical}
F(x,y)=(\lambda _{1}x+O(2),  \lambda _{2}y+O(2)),
\end{equation}
given by two power series in the variables $x,y$.  Since we consider the case of locally invertible maps, the eigenvalues $\lambda_i$ are nonzero.  

We have two main results, which we express now as one theorem:
\begin{theorem} \label{formal->analytic} Suppose that $F$ and $G$ are mappings of the form (\ref{typical}) with the same eigenvalues $\lambda_1$, $\lambda_2$.  Suppose either of the following hold:
\begin{enumerate}
\item (Attracting/Repelling Case) $\lambda_2=\lambda_1^n$ for some $n \geq 2$, and $|\lambda_i| \neq 1$
\item (Semihyperbolic Case) $\lambda_1=1$ and $|\lambda_2|\neq 1$.
\end{enumerate}
Then $F$ and $G$ are formally equivalent if and only if they are analytically equivalent.
\end{theorem}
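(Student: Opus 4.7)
The implication ``analytically equivalent $\Rightarrow$ formally equivalent'' is immediate, so I focus on the converse. My plan is the standard reduction to a common normal form: since formal equivalence is transitive, it suffices to produce an analytic (i.e.\ convergent) change of coordinates $H = \mathrm{Id} + O(2)$ from an arbitrary $F$ of the given shape to a fixed formal normal form $F_0$. Applying this separately to $F$ and $G$ and composing (noting that formal invariants in $F_0$ are preserved by formal equivalence, so the two normal forms agree) then yields the desired analytic conjugacy.

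First I would catalog the resonances in each case. In Case 1, since $|\lambda_1| \neq 1$ the eigenvalue $\lambda_1$ is not a root of unity, and the resonance $\lambda_j = \lambda_1^{i_1+ni_2}$ with $i_1+i_2 \geq 2$ forces $(i_1,i_2,j) = (n,0,2)$. Hence one may take the polynomial normal form $F_0(x,y) = (\lambda_1 x,\, \lambda_2 y + c\, x^n)$. In Case 2, the resonant monomials are $x^{i_1}$ in the first coordinate for $i_1 \geq 2$ and $x^{i_1} y$ in the second coordinate for $i_1 \geq 1$, so
\begin{equation*}
F_0(x,y) = \bigl(x + \hat p(x),\ \lambda_2 y \cdot \hat q(x)\bigr)
\end{equation*}
with $\hat p$ of order $\geq 2$ and $\hat q(0) = 1$, a priori only formal power series.

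Next I would set up the cohomological equation. Writing $H = \mathrm{Id} + \hat H$ and collecting in $H \circ F = F_0 \circ H$ the terms of total degree $d$, one obtains a linear system of the form $\Lambda \hat H_d - \hat H_d \circ \Lambda = R_d$, where $\Lambda = \mathrm{diag}(\lambda_1,\lambda_2)$ and $R_d$ is a polynomial expression in the coefficients of $F$, in the previously-determined coefficients of $F_0$, and in $\hat H_e$ for $e<d$. The operator acts on the monomial $x^{i_1}y^{i_2}$ in the $j$-th coordinate by the scalar $\lambda_j - \lambda_1^{i_1}\lambda_2^{i_2}$; at non-resonant indices one divides by this scalar to determine $\hat H_d$, and at resonant indices one instead reads off the corresponding new coefficient of $F_0$.

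The key non-archimedean estimate is a uniform lower bound on these denominators. In Case 1, the ultrametric inequality gives $|\lambda_1^{j} - \lambda_1^{i_1+ni_2}| = |\lambda_1|^{\max(j,\,i_1+ni_2)}$ when $|\lambda_1|>1$ (with the analogous minimum if $|\lambda_1|<1$); in either case the denominator is at least $|\lambda_1|^{\pm 2}$, and in the attracting case actually grows with $|I|$. In Case 2, for $i_2 \geq 1$ the denominator equals $\max(1,|\lambda_2|^{i_2})$ or $|\lambda_2|\max(1,|\lambda_2|^{i_2-1})$ depending on the coordinate, again uniformly bounded below. Inverting therefore enlarges coefficients by at most a bounded factor, and an induction on $d$ using the strong triangle inequality $|R_d| \leq \max(\ldots)$ gives $|\hat H_d| \leq M^d$ for some constant $M$ depending only on the Taylor data of $F$. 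Since over a non-archimedean field a power series converges wherever its terms tend to zero, $H$ converges on the polydisk of radius $1/M$. The main obstacle is Case 2, where the same recursion must simultaneously produce the infinitely many resonant coefficients of $\hat p$ and $\hat q$; one must verify that extracting the resonant residue of $R_d$ at each step inherits the same geometric bound, so that $F_0$ itself is analytic, not merely formal. Once this is in hand, the two analytic conjugacies to $F_0$ compose to give the required analytic equivalence between $F$ and $G$.
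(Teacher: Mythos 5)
Your Case 1 outline matches the paper's strategy: reduce both maps analytically to the polynomial PD-form $(\lambda x,\lambda^n y+cx^n)$ and invoke its uniqueness (Proposition \ref{PD Uniqueness}). There the only denominators are $\lambda_j-\lambda_1^{i_1}\lambda_2^{i_2}$, whose norms are bounded away from $0$, and your induction is plausibly workable; the paper carries it out via the finer ``dynamic'' estimates $\lambda^{i+nj}c_{ij}\in\Delta$, which are stable under composition of the successive polynomial conjugations and immediately yield convergence.

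The gap is in Case 2, and it is twofold. First, the Poincar\'e--Dulac form $F_0(x,y)=(x+\hat p(x),\lambda y\,\hat q(x))$ is \emph{not} a formal invariant: it retains infinitely many coefficients, and the PD algorithm has infinitely many free parameters, so two formally equivalent semihyperbolic maps will in general have \emph{different} PD-forms. Your concluding step --- ``the two normal forms agree, so the conjugacies compose'' --- therefore fails at the PD level even if analyticity of $F_0$ is granted. The paper must push further: it conjugates the PD-form to the polynomial PDJ-form $(x+\rho x^m+\mu x^{2m-1},\ \lambda y(1+r(x)))$ with $\deg r<m$ (Theorem \ref{Jenkins Form}), proves that form is unique up to an $(m-1)$-root of unity acting on $r$, and only then concludes. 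Second, that further reduction is not covered by your ``denominators uniformly bounded below'' philosophy: the coefficient $c_{n+1}$ of the reducing map is obtained by dividing by $(n+1)\rho$, and $|n+1|$ can be arbitrarily small in a non-archimedean field (e.g.\ along powers of $p$ in $\Q_p$). This is a genuine small-divisor problem, which the paper handles with the estimate $n!\rho^n c_n\in\Delta$ together with $|n!|\geq\alpha^n$ (Proposition \ref{n!}). So in the semihyperbolic case your argument is missing both the uniqueness step and the second, qualitatively different, convergence argument.
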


We accomplish this by first proving that each such map $F$ is analytically equivalent to its Poincar\'{e}-Dulac (or ``PD") form, which is one of the normal forms we consider. The conjugating maps are inductively constructed by composing homogeneous polynomial maps $P_{n}$ of degree $n$.  Each $P_{n}$ eliminates the corresponding degree-$n$ terms of $F$. In the absence of resonance, $F$ may be formally linearized with this method.

In this paper we analyze the mechanics of the Poincar\'{e}-Dulac algorithm, and measure the growth of the coefficients as the maps $P_{n}$ compose.  The precise rate of growth of these coefficients is subtle, but if captured can prove convergence.   We find estimates on the various $P_{n}$ which hold up under composition.  We refer to such estimates as ``dynamic functions" (the name is chosen because these functions behave well with respect to conjugation and iteration).  In fact, one of the interesting consequences of our construction of dynamic functions is that they determine nontrivial groups of analytic germs. In particular, we show that the various $P_{n}$ lie in a dynamic group, so that their composition $\Phi$ does as well.  This method of incorporating dynamic estimates into the PD-algorithm had its genesis in our one-variable paper \cite{Jenkins-Spallone}.

In the attracting/repelling resonant case, our method proves that $F$ is analytically equivalent to its PD-form
\begin{equation}
F_1(x,y)=(\lambda x, \lambda^n y+Cx^{n}).
\end{equation}
If $C \neq 0$, then this may be further reduced by a linear map to
\begin{equation}
F_0(x,y)=(\lambda x, \lambda^n y+x^{n}).
\end{equation}
  
The two forms $(\lambda x, \lambda^n y+x^{n})$ and $(\lambda x, \lambda^n y)$ are formally inequivalent, and it follows that formal equivalence implies analytic equivalence in this case.
  
In the semihyperbolic case, our method proves that $F$ is analytically equivalent to its PD-form
  
\begin{equation}  \label{semi PD}
F_{0}(x,y)=\left( x+\sum_{i=2}^{\infty }a_{i}x^{i}, \lambda y\left( 1+\sum _{j=1}^{\infty }b_{j}x^{j}\right) \right).
\end{equation}

While this result is of independent interest (in particular, it guarantees that the PD-form is in fact analytic), it does not yet suffice to give Theorem \ref{formal->analytic}.  In an earlier work, Jenkins \cite{Jenkins} showed that (\ref{semi PD}) may be further formally reduced to a certain polynomial form, which we refer to as the PDJ-form.   More precisely,

\begin{theorem} \label{Jenkins Form}
Fix $0\neq \lambda \in K$, and let
\begin{equation*}
F_{0}(x,y)=(f(x),\lambda y(1+g(x))),
\end{equation*}
where $f$ and $g$ are locally $K$-analytic at $0$, with $f(x)=x+\rho x^{m}+O(x^{m+1})$, with $\rho \neq 0$, and $g(0)=0$. Then, there is a polynomial $r(x)$ with $r(0)=0, \deg r<m$ and an analytic mapping $H(x,y)=(h(x),yk(x))$ with $h$ tangent to the identity and $k(0)=1$, and $\rho, \mu \in K$, so that
\begin{equation*}
H\circ F_{0}\circ H^{-1}(x,y) =(x+\rho x^{m}+\mu x^{2m-1}, \lambda y (1+r(x))).
\end{equation*}
\end{theorem}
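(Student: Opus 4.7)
The plan is to reduce the normalization to a one-dimensional cohomological equation, exploiting the skew-product structure shared by $F_0$ and the ansatz $H$. First, I would invoke the one-variable non-archimedean theory of \cite{Jenkins-Spallone} to produce an analytic $h(x)$ tangent to the identity such that $\tilde f(x) := h\circ f\circ h^{-1}(x) = x + \rho x^m + \mu x^{2m-1}$; this settles the first coordinate and identifies the invariant $\mu$.

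With such an $h$ fixed, a direct computation yields
\begin{equation*}
H\circ F_0\circ H^{-1}(x,y) = \left(\tilde f(x),\ \lambda y\cdot \frac{(1+g(u))\,k(f(u))}{k(u)}\right), \qquad u := h^{-1}(x).
\end{equation*}
Requiring the second coordinate to take the form $\lambda y(1+r(x))$ with $\deg r<m$, and setting $\phi := \log k$, the problem reduces to the cohomological equation
\begin{equation*}
\phi(f(u)) - \phi(u) = \log(1+r(h(u))) - \log(1+g(u)).
\end{equation*}
Since $f(u) - u = \rho u^m + O(u^{m+1})$, the operator $T\colon \phi\mapsto \phi\circ f - \phi$ sends any series with $\phi(0)=0$ into the ideal $(u^m)$. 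Consequently the equation is solvable only if its right-hand side lies in $(u^m)$, which forces a unique choice of polynomial $r$ of degree less than $m$ with $r(0)=0$ such that $\log(1+r(h(u))) \equiv \log(1+g(u)) \pmod{u^m}$; a short leading-term argument shows that $r\mapsto \log(1+r(h(u)))\pmod{u^m}$ is a bijection on this space of polynomials, so $r$ exists and is unique.

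With $r$ determined, the right-hand side has only terms of degree at least $m$, and the coefficients $c_j$ of $\phi=\sum_{j\geq 1} c_j u^j$ are obtained recursively: the coefficient of $u^{j+m-1}$ in $T(\phi)$ equals $j\rho\,c_j$ plus a polynomial expression in $c_1,\ldots,c_{j-1}$, so each $c_j$ is found by dividing by $j\rho$. The main obstacle is upgrading this formal $\phi$ to an analytic germ: the denominators $|j\rho|$ can be arbitrarily small in the ultrametric norm, since $|j|$ can be as small as $|p|^{v_p(j)}$ when the field $K$ has residue characteristic $p>0$. I would attack this by adapting the ``dynamic function'' estimates developed in \cite{Jenkins-Spallone}, which control how the accumulation of such denominators in a Poincar\'e-Dulac-type recursion is compensated by the growing vanishing orders of the series they act on. Once $|c_j|^{1/j}$ is shown to be bounded, $k = \exp\phi$ is analytic on a disk around $0$, and $H(x,y) = (h(x), yk(x))$ is the desired analytic conjugator.
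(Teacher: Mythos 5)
Your route is essentially the paper's: normalize the first coordinate by the one-variable non-archimedean theory, then solve a functional equation for the second-coordinate multiplier degree by degree, dividing by $j\rho$ at the $j$-th step. Passing to $\phi=\log k$ is a harmless repackaging of what the paper does (it writes $k$ as an infinite product $\prod_{i}(1+c_ix^i)$ and solves for the $c_i$); in characteristic zero the formal $\log$/$\exp$ translation is legitimate and preserves analyticity because $|n!|\geq\alpha^n$ (Proposition \ref{n!}). Your identification of $r$ and the formal recursion for the $c_j$ are correct.

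The genuine gap is that the convergence estimate --- which is the entire content of the theorem beyond the formal reduction already established in \cite{Jenkins} --- is announced rather than proved. ``Adapting the dynamic function estimates'' to show $|c_j|^{1/j}$ is bounded is a plan, not an argument, and the estimate needed here is not one of the dynamic-group estimates used at the Poincar\'e--Dulac stage; it is a separate induction. Concretely, after first scaling by a diagonal map so that $|\rho|\leq 1$ and all coefficients of $f$ and $g$ are integral (a reduction you also need and do not make), one proves $j!\,\rho^{j}c_j\in\Delta$ by induction on $j$. The delicate points are: (a) the bound on the $c_i$ must first be transferred to the coefficients $A_j$ of $k-1$ (respectively of the relevant powers and compositions), which is the role of Lemma \ref{product estimation} and requires the integrality of the multinomial coefficients $\tbinom{a}{a_0,\ldots,a_s}$ and of $(ab)!/(a!\,(b!)^a)$; and (b) when one extracts the coefficient of $u^{m+n}$ from expressions like $\sum_j A_j\bigl((u+\rho u^m+\mu u^{2m-1})^j-u^j\bigr)$ and its products with $g$ and $r$, every contribution with $j\leq n$ satisfies the required bound with room to spare because $|n!\,\rho^{n}|\leq|j!\,\rho^{j}|$, while the \emph{only} contribution with $j=n+1$ is the single term $(n+1)\rho A_{n+1}$ coming from the triple $(a,b,c)=(n,1,0)$ --- precisely the quantity being solved for --- so that nothing worse than $(n+1)\rho$ is ever inverted. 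Without carrying out this bookkeeping, your argument establishes only the formal statement, and it is exactly here that the non-archimedean hypothesis (via $|n!|\geq\alpha^n$) enters to convert $n!\,\rho^{n}c_n\in\Delta$ into geometric growth and hence analyticity of $k$ and of $H$.
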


The PDJ-form is unique up to the action of an $(m-1)$-root of unity on $r(x)$.  In this paper we review this reduction and prove that is actually an analytic conjugation, using another argument akin to that of \cite{Jenkins-Spallone}.  The semihyperbolic case of Theorem \ref{formal->analytic} now follows easily.  

We now describe the layout of this paper.  In Section \ref{Preliminaries} we give a brief summary of the theory of nonarchimedean fields and observe some algebraic structure of the maps we are considering.
 In Section \ref{Formal Theory} we recall the theory of formal equivalence in the form that we need, and clarify some uniqueness of normal forms.
 In Section \ref{attracting and repelling maps!} we prove the attracting/repelling case of Theorem \ref{formal->analytic}.  In Section \ref{semi-hyperbolic maps!} we prove in the semihyperbolic case that maps are analytically equivalent to their PD-form.  In Section \ref{Jenkins!} we prove Theorem \ref{Jenkins Form}, which gives Theorem \ref{formal->analytic} for the semihyperbolic case.
Section \ref{Dynamic!} introduces the notion of a dynamic function and works out the properties that we use in this paper. As dynamic functions not only provide an elementary means of estimating power series (in both one and several variables), but also determine nontrivial subgroups of analytic germs, we believe that this theory will have application beyond this paper.  Some concluding remarks may be found in Section \ref{Remarks}.

\section{Preliminaries and Notation}\setcounter{equation}{0}\label{Preliminaries} 
 
\subsection{Non-archimedean Fields}
In this paper $K$ denotes a non-archimedean complete (nontrivial) valued field $K$ of characteristic $0$. We give only the briefest survey of the pertinent definitions for our purposes.  For a more fleshed-out approach, see \cite{Schikhof}.

\begin{define} Let $K$ be a field.  A non-archimedean norm on $K$ is a map $| \cdot |: K \rightarrow \R$ satisfying the following rules, for all $x,y \in K$:
\begin{enumerate}
\item $|x| \geq 0, |x|=0$ if and only if $x=0$.
\item $|x+y| \leq \max (|x|,|y| )$.
\item $|xy|=|x||y|$.
\end{enumerate}
A non-archimedean field is a pair $(K, | \cdot |)$.

\end{define}

We will simply write $K$ when the norm is implicit.

Perhaps the most important examples of non-archimedean fields are the following: consider $K=\Q$ and choose a prime $p \in {\mathbb Z}$.  Consider the function
\begin{equation*}
\left| \frac{m}{n}\right| _{p}=\left( \frac{1}{p}\right) ^{\ord_p(m)-\ord_p(n)},
\end{equation*}
where $\ord_p(n)$ is the exponent of $p$ in the prime factorization of $n$.
Then $|\cdot |_{p}$ is a non-archimedean norm on $\Q$, and the field of $p$-adic numbers $\Q_p$ is defined to be the topological completion of this normed field. 

Balls in normed fields are defined in the usual way.  For $x_{0}\in K$ and $\eps >0$, we define the ``open'' and ``closed'' balls centered at $x_{0}$ of radius $\eps$ as 
\begin{equation*}
B(x_{0},\eps)=\{ x\in K: |x-x_{0}|<\eps \} ,
\end{equation*}
\begin{equation*}
B^{+}(x_{0},\eps)=\{ x\in K: |x-x_{0}|\leq \eps \} .
\end{equation*}

We recall one estimate which will be critical to the proof of Theorem \ref{Jenkins Form}. A proof may be found in \cite{Schikhof}.

\begin{prop}\label{n!}
Given a field $K$ of characterstic $0$ with non-archimedean norm $|\cdot |$, there is an $\alpha \in {\mathbf R}$ with $\alpha >0$ so that for all natural numbers $n$, we have $|n!|\geq \alpha ^{n}$.
\end{prop}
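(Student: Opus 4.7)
The plan is to analyze the restriction of $|\cdot|$ to the prime subfield $\Q \subset K$, which comes for free since $K$ has characteristic $0$, and to reduce the estimate to a $p$-adic computation by way of a lightweight form of Ostrowski's theorem. First I would establish that $|n| \leq 1$ for every positive integer $n$, obtained by writing $n = 1 + \cdots + 1$ and iterating the ultrametric inequality (and noting $|1|=1$ since $|1|=|1|^2$). Then I would study the set of rational primes $p$ with $|p| < 1$.

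The key elementary claim is that this set contains \emph{at most one} prime. Indeed, suppose two distinct primes $p$ and $q$ both satisfied $|p|, |q| < 1$. By Bezout there exist integers $a, b$ with $ap + bq = 1$, and since $|a|, |b| \leq 1$ the ultrametric inequality gives
\[
1 = |1| = |ap + bq| \leq \max(|a||p|,|b||q|) \leq \max(|p|,|q|) < 1,
\]
a contradiction.

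Now I would split into two cases. If $|p| = 1$ for every rational prime, then multiplicativity forces $|n| = 1$ for every nonzero integer, so $|n!| = 1 \geq 1^n$ and we may take $\alpha = 1$. Otherwise there is a unique prime $p$ with $|p| < 1$, and multiplicativity yields $|n| = |p|^{\ord_p(n)}$ for every positive integer $n$. Applying Legendre's formula
\[
\ord_p(n!) = \sum_{k \geq 1} \lfloor n/p^k \rfloor \leq \frac{n}{p-1},
\]
and using that $|p| < 1$ \emph{reverses} inequalities upon exponentiation, one obtains $|n!| = |p|^{\ord_p(n!)} \geq |p|^{n/(p-1)}$, so $\alpha = |p|^{1/(p-1)}$ works. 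The main obstacle, modest as it is, is the Bezout/ultrametric step pinning down a unique bad prime; the remainder is bookkeeping, with the only pitfall being the inequality reversal when exponentiating a quantity less than $1$.
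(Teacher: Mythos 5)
Your proof is correct and complete. The paper itself gives no argument for this proposition, deferring to Schikhof, so there is nothing to compare against; your write-up is the standard self-contained derivation. Each step checks out: $|m|\leq 1$ for integers $m$ via the ultrametric inequality, the Bezout argument showing at most one rational prime has norm less than $1$, the trivial case $\alpha=1$ when no such prime exists, and in the remaining case the identity $|n!|=|p|^{\ord_p(n!)}$ combined with Legendre's bound $\ord_p(n!)\leq n/(p-1)$ and the correctly handled inequality reversal for exponentiating $|p|<1$, giving $\alpha=|p|^{1/(p-1)}$ (which is exactly $1/p^{1/(p-1)}\geq 1/p$, consistent with the paper's remark that one may take $\alpha=1/p$ when $\Q_p\subseteq K$).
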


For example, if ${\mathbb Q}_{p}\subseteq K$, then we may take $\alpha =1/p$.
 
It is worth noting that for any non-archimedean field $K$, the ball $\Delta =B^{+}(0,1)$ is a ring, and for $0<\eps<1$, the ball $B^{+}(0,\eps)$ is an ideal of this ring. We will refer to $\Delta $ as the ring of integers of $K$.   In the case $K=\Q_p$, the ring $\Delta=\mathbb{Z}_p$ is called the ring of $p$-adic integers.

\subsection{Near-rings and groups of  mappings}
Write $\W$ for the set of nonnegative integers.

\begin{define} Given a vector $\vec{a}=(a^{(1)}, \ldots, a^{(r)}) \in \W^r$, let
\begin{equation*}
|\vec{a}|=\sum_{k=1}^r a^{(k)}.
\end{equation*}
\end{define}

\begin{define} Given a formal power series $ f \in K[|x_1,\ldots,x_r |]$, and a vector $\vec{a}=(a^{(1)}, \ldots, a^{(r)}) \in \W^r$, write $[f]_{\av}$ for the coefficient of $x_1^{a^{(1)}} \cdots x_r^{a^{(r)}}$ in $f$.  
\end{define}
Thus,
\begin{equation*}
f(x_1, \ldots, x_r)= \sum_{\av} [f]_{\av} x_1^{a^{(1)}} \ldots x_r^{a^{(r)}}.
\end{equation*}

\begin{define} Let $(x_1, \ldots, x_r) \in K^r$ and let $\eps_1, \ldots, \eps_r$ be positive real numbers.  Then the product of balls $B(x_1,\eps_1) \times \cdots \times B(x_r,\eps_r)$ in $K^r$ is called a polydisc.  A power series $f \in K[|x_1,\ldots,x_r|]$ is analytic (at $0$) if it converges in some polydisc in $K^r$ containing $0$.
\end{define}

As usual, geometric growth of coefficients implies analyticity. 
\begin{lemma} Let $f \in K[|x_1,\ldots,x_r|]$.  Suppose that there is a number $R>0$ so that for all $\vec{a}$ with $|\vec{a}|$ sufficiently large, we have
\begin{equation*}
|[f]_{\vec{a}}| \leq R^{|\vec{a}|}.
\end{equation*}
Then $f$ is analytic at $0$.
\end{lemma}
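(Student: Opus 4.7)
The plan is a direct appeal to the ultrametric convergence criterion: in a non-archimedean complete field, a series converges if and only if its general term tends to $0$. For multi-variable power series $\sum_{\av} c_{\av} x_1^{a^{(1)}} \cdots x_r^{a^{(r)}}$ this means that for every $\delta > 0$, only finitely many multi-indices $\av \in \W^r$ give a term of norm exceeding $\delta$, which can be phrased as $|c_{\av} x_1^{a^{(1)}} \cdots x_r^{a^{(r)}}| \to 0$ as $|\av| \to \infty$. So I just need to exhibit a polydisc on which this happens.

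Concretely, I would pick positive real numbers $\eps_1, \ldots, \eps_r$ with $R \eps_i < 1$ for each $i$, and set $\delta = \max_i R\eps_i < 1$. On the polydisc $B(0,\eps_1) \times \cdots \times B(0,\eps_r)$, for any multi-index $\av$ with $|\av|$ large enough that the bound $|[f]_{\av}| \leq R^{|\av|}$ applies, the multiplicativity of the norm together with $|x_i| \leq \eps_i$ gives
\begin{equation*}
|[f]_{\av} x_1^{a^{(1)}} \cdots x_r^{a^{(r)}}| \;\leq\; R^{|\av|}\, \eps_1^{a^{(1)}} \cdots \eps_r^{a^{(r)}} \;=\; (R\eps_1)^{a^{(1)}} \cdots (R\eps_r)^{a^{(r)}} \;\leq\; \delta^{|\av|}.
\end{equation*}
Since $\delta < 1$, the right-hand side tends to $0$ as $|\av| \to \infty$, so the terms of the series tend to $0$ at every point of the polydisc.

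The finitely many multi-indices $\av$ for which the hypothesis $|[f]_{\av}| \leq R^{|\av|}$ may fail do not affect this conclusion, since finite sets automatically satisfy the required ``tail'' bound. Thus $f$ converges on the chosen polydisc and is analytic at $0$. There is no genuine obstacle here beyond being careful that the chosen polydisc is non-degenerate (which is why we take $\eps_i > 0$ strictly) and that the multi-index convergence criterion in the non-archimedean setting truly reduces to $|[f]_{\av} x^{\av}| \to 0$; both are immediate from the ultrametric inequality invoked in the introduction.
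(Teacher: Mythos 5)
Your proof is correct and is essentially the same as the paper's, which simply observes that $f$ converges on any polydisc $B(0,\eps)^r$ with $0<\eps<1/R$; you have merely spelled out the geometric decay of the terms and the ultrametric convergence criterion that the paper leaves implicit. No issues.
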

\begin{proof}
In fact it converges on any polydisc $B(0,\epsilon)^r$, with $0 < \eps<\frac{1}{R}$.

\end{proof}

\begin{define} Fix a number $t \in \W$, and put $A_t=\{ \vec{a} \mid |\vec{a}| \geq t \}$. \end{define}
 
\begin{define} Let $K[|x_1,\ldots,x_r |]_0$ denote the set of formal power series $f$ in $r$ variables with zero constant term, i.e., with $f(0)=0$.  Let $n \geq 1$.  Put
\begin{equation*}
I[n]=\{ f \in K[|x_1,\ldots,x_r |]_0 \mid [f]_{\av}=0 \text{ unless } |\av| \geq n  \}.
\end{equation*}
\end{define}

\begin{define} Let $\F=\F_r$ be the set of formal maps $F: K^r \to K^r$ with $F(0)=0$.  These are given in the usual way by $r$-tuples of power series in $K[|x_1,\ldots,x_r |]_0$.
Let
\begin{equation*}
\F[n]= \{ F \in \F \mid \pi_kF \in I[n] \text{   } \forall k \}.
\end{equation*}
\end{define}

Here $\pi_kF$ denotes the $k$th coordinate of $F$.  We also write  $[F]^{k}_{\vec{a}}$ for the coefficient of  $x_1^{a^{(1)}} \cdots x_r^{a^{(r)}}$ in $\pi_{k}F$.
Of course, $F$ is analytic (at $0$) iff each $\pi_kF$ is analytic.

By the following lemma, whose proof we omit, we will often assume that $\tilde F$ has integral coefficients.  Write $L_q$ for scalar multiplication by an element $q \in K$.
\begin{lemma} \label{q-change} Let $F \in \F$ be analytic.  Then there is a $q \in K^{\times}$ so that
\begin{equation*}
\left[ L_q^{-1} \circ F \circ L_q \right]_{\vec{a}}^k \in \Delta,
\end{equation*}
for all $k$ and for all $\vec{a} \in A_2$.
\end{lemma}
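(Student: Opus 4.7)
The plan is to choose $q \in K^\times$ of sufficiently small absolute value, and verify the required bound by a direct computation. The starting observation is that conjugation by $L_q$ simply rescales every coefficient of $F$ by a power of $q$: writing $\pi_k F = \sum_{\av} [F]^k_{\av} x_1^{a^{(1)}} \cdots x_r^{a^{(r)}}$, one has
\[
L_q^{-1} \circ F \circ L_q(x_1,\ldots,x_r) = \tfrac{1}{q}\, F(qx_1,\ldots,qx_r),
\]
so that $[L_q^{-1} \circ F \circ L_q]^k_{\av} = [F]^k_{\av}\, q^{|\av|-1}$. Our task therefore reduces to choosing $q \in K^\times$ with
\[
|[F]^k_{\av}| \cdot |q|^{|\av|-1} \leq 1 \qquad \text{for all } k \text{ and all } \av \in A_2.
\]

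The next step is to use analyticity of $F$ to obtain a geometric bound on its coefficients. Since each $\pi_k F$ converges on some common polydisc $B(0,\eps)^r$, picking any $0 < \eps_0 < \eps$ and evaluating at $(\eps_0,\ldots,\eps_0)$, the ultrametric characterization of convergence (terms must tend to $0$) yields that the quantities $|[F]^k_{\av}| \eps_0^{|\av|}$ are bounded by some constant $C$, uniformly in $k$ and $\av$. Setting $R = 1/\eps_0$, this reads $|[F]^k_{\av}| \leq C R^{|\av|}$.

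Combining, it suffices to arrange $CR^{|\av|} |q|^{|\av|-1} \leq 1$ for all $|\av| \geq 2$. Factoring as $CR \cdot (R|q|)^{|\av|-1}$, if we require $|q| \leq 1/R$ and $|q| \leq 1/(CR^2)$ simultaneously, then $R|q| \leq 1$ so the expression is maximized at $|\av|=2$, where it equals $CR^2|q| \leq 1$. The nontriviality of the norm on $K$ ensures that elements of arbitrarily small positive absolute value exist, e.g., as high powers of any $q_0 \in K^\times$ with $|q_0| < 1$, so a valid $q$ is readily found.

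I do not anticipate any real obstacle: the proof is entirely mechanical once one notes that $L_q$-conjugation multiplies the $\av$-coefficient by $q^{|\av|-1}$, which beats any fixed geometric growth rate as soon as $|q|$ is sufficiently small. This is precisely why the lemma is stated without proof in the paper.
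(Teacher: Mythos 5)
Your proof is correct, and it is exactly the standard rescaling argument the paper has in mind (the paper explicitly omits the proof of this lemma, and uses the same trick again in Section 6 when it conjugates by a diagonal map to make coefficients small). The only cosmetic point is that one should evaluate at a point $(z,\ldots,z)$ with $z \in K^{\times}$ and $|z| < \eps$ rather than at the real number $\eps_0$ itself, since $\eps_0$ need not lie in the value group of $K$; taking $\eps_0 = |z|$ the rest of your computation goes through verbatim.
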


$\F$ is not quite a ring under the operations of addition and composition.  The  left distribution law 
\begin{equation*}
F \circ (G+H)=F \circ G +F \circ H
\end{equation*} 
generally does not hold.  Since all the other axioms of a ring hold, including right distribution, $\F$ is what is called a ``near-ring".  (For a general introduction to near-rings, see \cite{Pilz}.)

Moreover $\F[n]$ is a two-sided ideal of $\F$.  Write $F \equiv G \Mod \F[n]$ if $F-G \in \F[n]$.  It is easy to see that 
$F \circ (G + H) \equiv F \circ G \Mod \F[n]$ if $H \in \F[n]$, and so the set of left additive cosets $\F/\F[n]$ inherits the structure of a near-ring.  Write 
\begin{equation*}
p[n]: \F \to \F/\F[n]
\end{equation*}
for the quotient map, a homomorphism of near-rings.

Next, we introduce some groups associated to these near-rings.

\begin{define}  Write $\diag(\Delta)$ for the group of diagonal linear maps in $\F$ whose eigenvalues are units.
For $F \in \F$, write $DF_0$ for its linear part (the Jacobian at $0$), and let $\tilde{F}=F-DF_0$.
Put 
\begin{equation*}
\GG=\GG_r=\{ F \in \F \mid DF_0 \in \diag(\Delta) \},
\end{equation*}
and 
\begin{equation*}
\GG[n]=\{ F \in \GG_r \mid \tilde{F} \in  \F[n] \}.
\end{equation*}
\end{define}

It is easy to see that $\GG_r$ is a group under composition.  Note that $\GG_1$ is the group of formal power series with unit multiplier.
Since $\GG[n]= p[n]^{-1}(\diag(\Delta)+\F[n])$, it is a subgroup of $\GG_r$.
 
Finally, we define a subset $\GG(\tau)$ of formal maps in $\GG$ whose coefficients are governed by a function $\tau$.

\begin{define} Fix a number $t$, and a function $\tau: A_t \to K^r$ given by $\tau=(\tau_1, \ldots, \tau_r)$ and let
\begin{equation*}
I(\tau_k)=\{ f \in I[t] \mid \tau_k(\av) [f]_{\av} \in \Delta \text{  } \forall \av \in A_t \},
\end{equation*}
\begin{equation*}
\F(\tau)= \{ F \in \F[t] \mid \pi_k F \in I(\tau_k) \text{  } \forall k \}.
\end{equation*}
 
Finally, put
\begin{equation*}
\GG(\tau)= \{ F \in \GG[t] \mid \tilde{F} \in \F(\tau) \}.
\end{equation*}
\end{define}
 
We now give our first examples of groups associated to particular functions $\tau$. 
 
 \begin{prop}\label{maxes}
Let $r=t=2$.
Pick $\lambda \in K$ with $|\lambda| > 1$.  Put $t(m,n)=\max ( 1,n )$.  For both $k=1,2$, define $\tau_k: A_2 \to K$ via $\tau_k(m,n)=\lambda^{t(m,n) }$.  Let $\tau=(\tau_1,\tau_2)$.  Then $\GG(\tau)$ is a subgroup of $\GG$.
\end{prop}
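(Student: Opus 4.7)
The plan is to check the three subgroup axioms in turn: containment of the identity, closure under composition, and closure under inversion. The identity lies in $\GG(\tau)$ trivially, since its tilde part vanishes. The linear parts give no trouble: if $DF_0 = \diag(f_1,f_2)$ and $DG_0 = \diag(g_1,g_2)$ have unit entries, so do $DF_0 DG_0$ and $DF_0^{-1}$.

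For closure under composition, write $F_k = f_k x_k + \tilde F_k$ and $G_k = g_k x_k + \tilde G_k$; then
\begin{equation*}
\widetilde{F\circ G}_k \;=\; f_k\,\tilde G_k \;+\; \tilde F_k(G_1, G_2),
\end{equation*}
and the first summand inherits the $\tau$-bound at once. For the second, I would expand $G_1^m G_2^n$ by multinomial expansion, selecting $m-j$ copies of the linear monomial $g_1 x$ together with $j$ nonlinear monomials from $\tilde G_1$ carrying $y$-degrees $q_1^1,\ldots,q_j^1$, and similarly $n-l$ copies of $g_2 y$ together with $l$ nonlinear terms from $\tilde G_2$ with $y$-degrees $q_1^2,\ldots,q_l^2$. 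Using the $\tau$-bounds on $\tilde F$ and $\tilde G$, the unit norms of the $f_i$ and $g_i$, and the fact that integer binomials have norm at most $1$, each such contribution to the $(a,b)$-coefficient has norm at most
\begin{equation*}
|\lambda|^{-\max(1,n)\,-\,\sum_i \max(1,q_i^1)\,-\,\sum_i \max(1,q_i^2)}.
\end{equation*}
Since $b = (n-l) + \sum_i q_i^1 + \sum_i q_i^2$, $\max(1,q) \ge q$ for $q\ge 0$, and $\max(1,n) \ge n-l$, the exponent is at least $\max(1,b)$, as one checks separately in the cases $b=0$ and $b\ge 1$. The ultrametric inequality then yields $|[\widetilde{F\circ G}]^k_{(a,b)}| \le |\lambda|^{-\max(1,b)}$, as required.

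For closure under inversion, let $H = F^{-1}$ and decompose $\tilde H = \sum_{n\ge 2}\tilde H_n$ into homogeneous pieces. Matching the degree-$n$ part of $H\circ F=\Id$ produces the inductive identity
\begin{equation*}
\tilde H_n(f_1 x, f_2 y) \;=\; -DH_0\,\tilde F_n \;-\; \bigl[\textstyle\sum_{l<n}\tilde H_l(F)\bigr]_n,
\end{equation*}
where the bracket denotes the degree-$n$ homogeneous component. Assuming inductively that each $\tilde H_l$ with $l<n$ is $\tau$-bounded, the composition argument from the previous paragraph applies to $\tilde H_l$ and $F$ to show that $\tilde H_l(F)$ is also $\tau$-bounded; dividing out the units $f_1^p f_2^q$ on the left preserves the bound and yields the $\tau$-estimate for $\tilde H_n$, closing the induction at the trivial base case $n=2$.

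The main obstacle is the combinatorial inequality $\max(1,n) + \sum_i \max(1,q_i^1) + \sum_i \max(1,q_i^2) \ge \max(1,b)$ underlying composition; everything else, including inversion, follows from this bookkeeping together with standard ultrametric manipulations.
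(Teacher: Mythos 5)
Your argument is correct and is essentially the paper's own proof, specialized to this particular $\tau$: the paper defers the proposition to Section \ref{Dynamic!}, where it isolates exactly your combinatorial inequality as the statement that $\tau$ is ``dynamic'' (Proposition \ref{maxes2}, where it reduces to $a_0^{(2)}\leq a^{(2)}$) and then runs the same multinomial-expansion and degree-by-degree induction for composition and inversion in Lemma \ref{dynamic lemma} and Theorem \ref{group under composition}. The only difference is organizational --- you inline the general machinery, and you induct on homogeneous components of $F^{-1}$ where the paper uses truncations --- which changes nothing of substance.
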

 
\begin{prop} \label{mixed}
Let $r=t=2$.  Pick $\lambda \in K$ with $|\lambda| > 1$.
Fix a  number $n \in \N$.  Define $\tau_k: A_2 \to K$ as follows.  Put $t_1(i,j)=i+nj$, and $t_2(i,j)=\max(n,i+nj)$. For $k=1,2$, let $\tau_k(i,j)=\lambda^{t_k(i,j)}$.  Let $\tau=(\tau_1,\tau_2)$.  Then $\GG(\tau)$ is a subgroup of $\GG$.
\end{prop}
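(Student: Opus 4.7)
The plan is to verify the three subgroup conditions for $\GG(\tau)$: it contains the identity (trivial), and it is closed under composition and inversion. The central tool is the weighted Gauss norm $|H|_\rho := \sup_{(i,j)} |[H]_{(i,j)}|\, \rho^{i+nj}$ on $K[|x_1,x_2|]$, which coincides with the sup-norm of $H$ on the polydisc $B(0,\rho) \times B(0,\rho^n)$. In the non-archimedean setting this norm is multiplicative for products of series and submultiplicative under substitution, and its weighting (giving $x_1$ weight $1$ and $x_2$ weight $n$) matches exactly the growth penalty in $\tau_1$.

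For composition, write $F = A + \tilde F$, $G = B + \tilde G$ with $A,B \in \diag(\Delta)$, and decompose $F \circ G = AB + A\tilde G + \tilde F \circ G$. The linear part $AB$ remains in $\diag(\Delta)$, and $A\tilde G$ satisfies the $\tau$ bounds trivially since $|A_{kk}|=1$. The real work is bounding $\tilde F \circ G$. Evaluating at $\rho = |\lambda|$, the linear term of $G$ dominates, giving $|G_k|_{|\lambda|} = |\lambda|^{w(k)}$ with $w(1)=1, w(2)=n$, and $|\pi_k \tilde F|_{|\lambda|} \leq 1$ follows directly from the $\tau_k$ bounds. Submultiplicativity under substitution yields $|\pi_k(\tilde F \circ G)|_{|\lambda|} \leq 1$, hence $|[\tilde F \circ G]^k_{(i,j)}| \leq |\lambda|^{-(i+nj)}$ for $(i,j) \in A_2$. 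This gives the $\tau_1$ bound in full, and the $\tau_2$ bound for $(i,j)$ with $i + nj \geq n$.

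The remaining $\tau_2$ cases are $(i,0)$ with $2 \leq i < n$, where we need the stronger bound $|\lambda|^{-n}$. Since the $x_1^i x_2^0$-coefficient of $\pi_2(\tilde F \circ G)$ depends only on the $x_2$-free parts $\alpha_k := G_k|_{x_2 = 0}$, it equals $\sum_{(p,q) \in A_2} [\tilde F]^2_{(p,q)}\, [\alpha_1^p \alpha_2^q]_{x_1^i}$. The key observations are that $\alpha_2 = \sum_{s \geq 2} [\tilde G]^2_{(s,0)} x_1^s$, so each factor of $\alpha_2$ has norm $\leq |\lambda|^{-\max(n,s)} \leq |\lambda|^{-n}$, while $\alpha_1$ is close to $\beta_1 x_1$ modulo terms of norm $\leq |\lambda|^{-s}$. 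A case analysis on $(p,q)$ (namely $q \geq 1$; $q=0$ with $p = i$; and $q = 0$ with $2 \leq p < i$) shows each individual contribution has norm $\leq |\lambda|^{-n}$, and the ultrametric delivers the same bound for the sum.

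For closure under inverses, define iteratively $G^{(0)} = A^{-1}$ and $G^{(m+1)} = A^{-1} - A^{-1}(\tilde F \circ G^{(m)})$. A standard degree count yields $G^{(m+1)} - G^{(m)} \in \F[m+3]$, so the sequence converges formally to $F^{-1}$. By induction each $G^{(m)} \in \GG(\tau)$: given $G^{(m)} \in \GG(\tau)$, composition closure places $F \circ G^{(m)} \in \GG(\tau)$, so $\tilde F \circ G^{(m)} = \widetilde{F \circ G^{(m)}} - A\tilde G^{(m)}$ satisfies the $\tau$ bounds as a difference of maps that do, and hence so does $\tilde G^{(m+1)} = -A^{-1}(\tilde F \circ G^{(m)})$. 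The main obstacle is precisely the low-weight second-component case $(i,0)$ with $2 \leq i < n$: the weighted Gauss norm falls short by a factor $|\lambda|^{n-i}$, and one must exploit the vanishing of $\alpha_2$ through order $1$ in $x_1$ to recover the needed bound.
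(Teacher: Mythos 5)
Your proof is correct, but it takes a genuinely different route from the paper's. The paper isolates an abstract combinatorial condition on $\tau$ (being ``dynamic,'' Definition \ref{beller}), verifies by hand that the weights $t_1(i,j)=i+nj$ and $t_2(i,j)=\max(n,i+nj)$ satisfy it (Proposition \ref{mixed2}), and then proves once and for all that $\GG(\tau)$ is a group for any such $\tau$ (Lemma \ref{dynamic lemma} and Theorem \ref{group under composition}), via multinomial expansions of $\pi_n F\circ H$ and an induction on truncations of the inverse. You instead observe that the weight $i+nj$ is exactly that of a weighted Gauss norm $|\cdot|_{|\lambda|}$, whose submultiplicativity under products and substitution (an immediate ultrametric computation --- full multiplicativity and the sup-norm interpretation are not actually needed) delivers the $\tau_1$-bound and the $\tau_2$-bound whenever $i+nj\geq n$ in one stroke; the only genuinely exceptional coefficients are $[\,\cdot\,]^2_{(i,0)}$ with $2\leq i<n$, which you handle by setting $x_2=0$ and noting that every coefficient of $\pi_2\tilde G|_{x_2=0}$ and every $[\tilde F]^2_{(p,0)}$ with $p\geq 2$ already has norm at most $|\lambda|^{-n}$; your Picard-type iteration $G^{(m+1)}=A^{-1}-A^{-1}(\tilde F\circ G^{(m)})$ for the inverse then inherits all bounds from composition closure. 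Your approach is shorter and more transparent for this particular $\tau$; the paper's buys generality, since the same machinery handles weight functions that are not of Gauss-norm type, notably the factorial weight of Proposition \ref{factorial} used in Section \ref{Jenkins!}. Two harmless slips: the increment of your iteration lies in $\F[m+2]$ rather than $\F[m+3]$ (irrelevant, since only coefficientwise stabilization is needed), and in the exceptional case the split of $q=0$ into $p=i$ versus $p<i$ is unnecessary, because $|[\tilde F]^2_{(p,0)}|\leq|\lambda|^{-\max(n,p)}\leq|\lambda|^{-n}$ uniformly.
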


The proofs of these propositions are deferred to Section \ref{Dynamic!}.  These two groups are instrumental to the analysis which follows, but they are by no means the only groups which we can construct. In fact, in Section \ref{Dynamic!}, we identify a class of functions $\tau$, called ``dynamic'', so that if $\tau$ is dynamic, then $\GG(\tau)$ forms a subgroup of $\GG$.
 
The following lemma will be useful later.  Its proof is immediate.

\begin{lemma} \label{useful} Let $r=2$ and $m \geq 2$.  Suppose
\begin{equation*}
H_m(x,y)= \left(x+ \sum_{i+j=m} c_{ij}x^iy^j, y+ \sum_{i+j=m} d_{ij}x^iy^j \right).
\end{equation*}
Let $G \in \F_2$ with $DG_0(x,y)=(\lambda_1 x,\lambda_2 y)$ for some $\lambda_1, \lambda_2 \in K$.  Then for $i+j=m$ we have
\begin{equation*}
\left[ H_m \circ G \right]^1_{(i,j)}=\left[ G \right]^1_{(i,j)}+c_{ij} \lambda_1^i \lambda_2^j,
\end{equation*}
and
\begin{equation*}
\left[ H_m \circ G \right]^2_{(i,j)}=\left[ G \right]^2_{(i,j)}+ d_{ij} \lambda_1^i \lambda_2^j.
\end{equation*}
For $n \geq 2$, $i+j=m$, and $k=1,2$ we have
\begin{equation*}
[ (\pi_k(H_m \circ G))^n]_{(i,j)}= [ (\pi_k G)^n ]_{(i,j)}.
\end{equation*}

\end{lemma}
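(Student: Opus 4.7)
The plan is a direct computation from the definition of composition. Since $H_m$ equals the identity plus a homogeneous degree-$m$ map in each coordinate, substituting $G$ into $H_m$ gives
\begin{equation*}
\pi_k(H_m \circ G) \;=\; \pi_k G \;+\; \sum_{i'+j' = m} e^{(k)}_{i'j'}\, (\pi_1 G)^{i'} (\pi_2 G)^{j'},
\end{equation*}
where $e^{(1)}_{i'j'} = c_{i'j'}$ and $e^{(2)}_{i'j'} = d_{i'j'}$. With this identity in hand, both claims reduce to extracting the coefficient of $x^i y^j$ with $i + j = m$ and keeping track of orders.

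For the first pair of formulas, I would observe that each $\pi_\ell G$ vanishes at $0$ with linear part $\lambda_\ell$ times the corresponding variable, so the product $(\pi_1 G)^{i'} (\pi_2 G)^{j'}$ is a power series of order at least $i' + j' = m$ whose degree-$m$ homogeneous component is exactly $\lambda_1^{i'} \lambda_2^{j'} x^{i'} y^{j'}$. Therefore, among the summands with $i' + j' = m$, only $(i',j') = (i,j)$ contributes to the coefficient of $x^i y^j$, yielding the extra term $e^{(k)}_{ij} \lambda_1^i \lambda_2^j$ on top of $[G]^k_{(i,j)}$.

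For the second identity, I would set $R_k = \sum_{i'+j' = m} e^{(k)}_{i'j'} (\pi_1 G)^{i'} (\pi_2 G)^{j'}$, so that $\pi_k(H_m \circ G) = \pi_k G + R_k$, and expand
\begin{equation*}
(\pi_k G + R_k)^n \;=\; (\pi_k G)^n \;+\; \sum_{s=1}^{n} \binom{n}{s} (\pi_k G)^{n-s} R_k^{s}.
\end{equation*}
Because $R_k$ has order $\geq m$ and $\pi_k G$ has order $\geq 1$, each cross term ($s \geq 1$) has order at least $(n - s) + s m \geq (n-1) + m > m$ once $n \geq 2$. Hence no cross term contributes to the coefficient of $x^i y^j$ with $i + j = m$, leaving only $[(\pi_k G)^n]_{(i,j)}$.

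I do not anticipate a real obstacle — both statements are immediate once the composition is written explicitly and degree counting is applied. The one mild caveat is the possibility $\lambda_k = 0$, in which case $\pi_k G$ has order $\geq 2$; this only strengthens the degree bound on the cross terms, so the argument goes through unchanged.
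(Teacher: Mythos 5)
Your proof is correct and is exactly the direct degree-counting computation the paper has in mind (the paper omits the proof, calling it ``immediate''). Nothing to add.
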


\section{Formal Equivalence} \label{Formal Theory}

\subsection{One Variable}

We recall here some of the formal theory in one variable, which holds for any field $K$ of characteristic zero.   In the complex setting the formal theory has been known for some time, and is impossible to ascribe to a single source.  The theory is entirely algebraic and needs only minor modifications when $K$ is not algebraically closed.

\begin{prop}\label{rational formal conjugacy}
Suppose that $f(x)=x+\rho x^{m}+\mu x^{2m-1}+ O(x^{2m})$, with $\rho, \mu \in K$ and $\rho \neq 0$.
\begin{enumerate}
\item $f$ is formally equivalent to $f_0(x)=x+ \rho x^m+ \mu x^{2m-1}$.
\item  Suppose that $f$ is formally equivalent to another power series of the form $g(x)=x+\rho'x^{n}+\mu'x^{2n-1}+ O(x^{2n})$.  Then $m=n$, and there exists $c \in K^\times$ so that $c^{m-1}\rho'=\rho$ and $c^{2m-2}\mu'=\mu$. 
\end{enumerate}
\end{prop}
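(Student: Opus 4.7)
The plan is to eliminate the degree-$k$ coefficient of the remainder, one $k$ at a time, for $k = 2m, 2m+1, \ldots$. Assuming inductively that
\[
f_k(x) = x + \rho x^m + \mu x^{2m-1} + b_k x^k + O(x^{k+1}),
\]
with all coefficients at degrees $2m \leq \ell < k$ already zero, I would conjugate by $h(x) = x + c_k x^{k-m+1}$. A direct Taylor expansion should show that $h \circ f_k \circ h^{-1}$ agrees with $f_k$ at all degrees below $k$, while its $x^k$ coefficient becomes $b_k + c_k \rho (k - 2m + 1)$. Since $\rho \neq 0$ and $k - 2m + 1$ is a positive integer, I can solve for $c_k$ to kill this coefficient. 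The infinite composition $H = \lim_N h_N \circ \cdots \circ h_{2m}$ converges in the formal topology because the lowest affected degree of $h_k$ is $k - m + 1 \to \infty$, and $H$ then yields the desired formal conjugacy from $f$ to $f_0$.

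\textbf{Part (ii).} I would write $h(x) = \sum_{i \geq 1} h_i x^i$ with $h_1 = c \neq 0$ and compare coefficients of the identity $h \circ f = g \circ h$ degree by degree. WLOG take $m \leq n$; the degree-$m$ comparison should read $c\rho + h_m = h_m + \delta_{m,n} \rho' c^m$, forcing $m = n$ (otherwise $c\rho = 0$) together with $\rho = c^{m-1}\rho'$. For each intermediate degree $k = m + j$ with $1 \leq j \leq m - 2$ (a range empty only when $m = 2$), I expect the resulting equation to take the form
\[
c^{m-1}(j + 1 - m)\, h_{j+1} = P_j(h_2, \ldots, h_j),
\]
where $P_j$ collects the lower-order contributions to $[h(x)^m]_{m+j}$ and satisfies $P_1 \equiv 0$. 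Since $j + 1 - m$ is a nonzero integer, an induction on $j$ will force $h_2 = h_3 = \cdots = h_{m-1} = 0$. Finally, the degree-$(2m-1)$ comparison introduces contributions $c\mu$ and $\mu' c^{2m-1}$ from the $\mu$, $\mu'$ terms, while the $h_m$ contributions $m \rho h_m$ and $m \rho' c^{m-1} h_m$ cancel via the relation $\rho = c^{m-1}\rho'$; using also $P_{m-1}(0, \ldots, 0) = 0$, this reduces to $c\mu = c^{2m-1}\mu'$, i.e.\ $\mu = c^{2m-2}\mu'$.

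The main delicate point in this plan will be the bookkeeping in Part (ii): one must verify that the polynomial $P_j$ recording contributions to $[h(x)^m]_{m+j}$ not linear in $h_{j+1}$ involves only $h_2, \ldots, h_j$ (so that the induction is well-posed) and that it vanishes when these arguments are set to zero. Both facts should follow from the combinatorial structure of the expansion of $(cx + h_2 x^2 + h_3 x^3 + \cdots)^m$: a monomial contributing to degree $m + j$ uses $m$ factors with indices summing to $m + j$, hence each index is at most $j + 1$, and the isolated $h_{j+1}$-linear part contributes exactly $m c^{m-1} h_{j+1}$. Once these combinatorial points are settled, the remainder of the argument is a routine degree-by-degree cancellation.
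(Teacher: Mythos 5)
Your argument is correct and complete: part (i) is the standard degree-by-degree elimination by conjugating with $x+c_kx^{k-m+1}$ (the induced change in the $x^k$-coefficient is indeed $c_k\rho(k-2m+1)\neq 0$ in characteristic zero), and part (ii) is the standard coefficient comparison in $h\circ f=g\circ h$, with the combinatorial point about $[h(x)^m]_{m+j}$ verified exactly as you describe. The paper itself does not prove this proposition but defers to the cited references, whose arguments are essentially the one you give, so there is nothing to reconcile.
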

 
Fix sets $R_j$ of coset representatives for $K^{\times}/ (K^{\times})^j$, so that all $\rho \in R_j$ have $|\rho| \leq 1$.  For example, if either $K$ is algebraically closed or if $j=1$, we may pick $R_j=\{ 1 \}$.  If $K=\Q_p$ with $p$ odd then we may pick $R_2=\{1,\epsilon, p, \epsilon p \}$, where $\epsilon \in \mathbb{Z}_p^{\times}$ is not a square.

\begin{define} We say that a map $f$ with multiplier one is in (rational) formal normal form if $f(x)=f_{m,\rho,\mu}(x)=x+\rho x^m+ \mu x^{2m-1}$ with $\rho \in R_{m-1}$.  
\end{define}

From Proposition \ref{rational formal conjugacy}, any map $f$ tangent to the identity is formally conjugate to a unique formal normal form. This choice is convenient for the proof of Theorem \ref{Jenkins Form}. 

\begin{prop} \label{us, bitches!} If $K$ is nonarchimedean, then two one-dimensional maps are analytically equivalent if and only if they are formally equivalent.  In particular each is analytically equivalent to its formal normal form. 
\end{prop}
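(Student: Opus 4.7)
The plan is to show that every one-dimensional analytic $f$ with $f(0)=0$ is analytically equivalent to its formal normal form. Since the reverse direction (analytic $\Rightarrow$ formal) is tautological, if the two given maps $f$ and $g$ are formally equivalent, they share a formal normal form, and analytic equivalence of each to this common form propagates by transitivity. I would split by cases on the multiplier $\lambda=f'(0)$.

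\emph{Hyperbolic case $|\lambda|\neq 1$.} Here the formal normal form is $\lambda x$. Build the linearizing conjugacy $h$ degree by degree by solving the cohomological equation; the coefficient at degree $n$ is obtained by dividing a polynomial expression in the earlier data by $\lambda^n-\lambda$. The ultrametric inequality gives $|\lambda^n-\lambda|=|\lambda|^n$ when $|\lambda|>1$ (for $n\geq 2$) and $|\lambda|$ when $|\lambda|<1$; in either case the denominator is bounded below by a constant times a geometric factor, so a straightforward induction forces the coefficients of $h$ to grow at most geometrically, and $h$ converges on a small ball.

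\emph{Neutral-irrational case $|\lambda|=1$ with $\lambda$ not a root of unity.} The algorithm is identical and no resonance appears; analyticity of the formal linearization is precisely the nonarchimedean linearization theorem of Herman--Yoccoz \cite{Herman-Yoccoz}, which I would cite.

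\emph{Parabolic case $\lambda=1$.} Proposition \ref{rational formal conjugacy} already gives a formal conjugation to the rational normal form $f_{m,\rho,\mu}(x)=x+\rho x^{m}+\mu x^{2m-1}$; the task is to promote it to an analytic one. This is carried out in \cite{Jenkins-Spallone}: the conjugacy is realized as the composition of homogeneous polynomial maps killing successive non-normal terms, and the compositions are shown to remain inside a group $\GG(\tau)$ for a suitable dynamic function $\tau$ in the sense of Section \ref{Dynamic!}, which forces the limiting map to be analytic.

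\emph{Root of unity case $\lambda^d=1$ with $d>1$.} Apply the parabolic case to $f^{\circ d}$, which is tangent to the identity, to get an analytic conjugation of $f^{\circ d}$ to its normal form. One then lifts by averaging over the cyclic group of order $d$ generated by $\lambda$: this standard manipulation yields an analytic conjugation of $f$ itself to its multi-petal normal form.

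The main obstacle is the parabolic case. For hyperbolic multipliers the denominators $\lambda^n-\lambda$ provide enough room that a bare induction suffices, but for $\lambda=1$ all resonance denominators collapse and the coefficient growth under the Poincar\'e--Dulac algorithm is genuinely delicate; controlling it is exactly the motivation for the dynamic-function apparatus of Section \ref{Dynamic!}, and the one-variable version was the point of \cite{Jenkins-Spallone}.
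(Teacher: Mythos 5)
Your outline is correct and matches the paper's treatment: the paper offers no proof of this proposition beyond citing \cite{REU}, \cite{Herman-Yoccoz}, and \cite{Jenkins-Spallone}, and your case division defers the two substantive cases (neutral non-root-of-unity and parabolic) to exactly those sources. The hyperbolic computation $|\lambda^n-\lambda|=|\lambda|^n$ or $|\lambda|$ is right (after first normalizing $f$ to integral coefficients via Lemma \ref{q-change} so the induction closes), so nothing essential is missing.
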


We refer the reader to  \cite{REU}, \cite{Herman-Yoccoz}, \cite{Jenkins-Spallone} for proofs.

\subsection{Poincar\'{e}-Dulac Theory}

This section gives a quick look at some of the formal normalizations of mappings $F \in \F$. Just as important as the formal results which we will present are the series which are constructed through the theory. In fact, our own results rest entirely on the ability to estimate these coefficients which arise in the construction.

We explain here how to eliminate terms in a function of the form (\ref{typical}). Before we begin, we have the following definition. Let $\gamma _{i}\in K$ for $i=1,\ldots ,n$. We say that the numbers $\{ \gamma _{i}\} $ possess resonance if there is a relation of the form
\begin{equation}\label{resonance}
\gamma _{j}=\prod_{i=1}^{n} \gamma _{i}^{m_{i}},
\end{equation}
where $m_{i}$ is a nonnegative integer for each $i$ and $\sum m_{i}\geq 2$. For example, if $1\geq |\gamma _{1}|\geq \cdots \geq |\gamma _{n}|>0$, then only finitely many resonances can exist. On the other hand, note that any set of the form $\{ 1, \lambda _{1},\ldots ,\lambda _{n}\} $ will possess infinitely many resonances, regardless of the presence of resonances among the set $\lambda _{1},\cdots ,\lambda _{n}$. In particular, we have that
\begin{enumerate}
\item
$1=1^{n}$ for all $n\geq 2$,
\item
$\lambda _{i}=1^{n}\lambda _{i}$ for all $n\geq 1$.
\end{enumerate}
Let $F,G \in \F$  of the form (\ref{typical}).  Our conjugating maps will take the form $H_{n}=Id+P_{n}({\mathbf x})$, where $P_{n}$ is a homogeneous polynomial mapping of degree $n$, $n\geq 2$. Write $\Phi_{n}=H_{n}\circ \cdots\circ H_{2}$, and write $F_{n}=\Phi _{n}\circ F\circ \Phi_{n}^{-1}$, for $n\geq 2$, and $F_{1}=F$. Each $H_{n}$ will be used to eliminate all terms of degree $n$ in $F$, except those which correspond to resonances in the eigenvalues. In order to accomplish this, we determine $H_m$ by the formulae (here, $m=\alpha _{1}+\cdots +\alpha _{n}$):
\begin{equation}\label{Poincare-Dulac formula}
[H_{m}]^{k}_{(\alpha _{1},\ldots ,\alpha _{n})}=\frac{[F_{m-1}]^{k}_{(\alpha _{1},\ldots ,\alpha_{n})}-[G]^{k}_{(\alpha _{1},\ldots ,\alpha _{n})}}{\lambda _{k}-\lambda _{1}^{\alpha _{1}}\cdots \lambda _{n}^{\alpha _{n}}}.
\end{equation}

Of course, this algorithm breaks down if the denominator is zero, i.e., whenever a resonance relation of the form ({\ref{resonance}) exists. Note that if no resonance relations exist, then the map may be formally linearized. If resonances exist, then this procedure will reduce the mapping $F$ to a form $F_{0}$ consisting of a sum of only resonant monomials, i.e. those monomials with multidegrees corresponding to some resonance condition. We will refer to this map $F_{0}$ as the Poincar\'{e}-Dulac (or PD) form of $F$. Note that, {\it a priori}, it is not clear that $F_{0}$ is actually an analytic mapping if infinitely many resonant conditions exist (e.g. if $\lambda _{1}=1$).

We remark that any coefficient of $H_{n}$ which is not used in the simplification (i.e. any coefficient of a term whose multidegree corresponds to a resonance) can be considered a ``free term'', and so if a resonance exists, then the conjugating map taking $F$ to its PD-form is not unique. We will take these free terms to be zero in what follows.\\

\noindent {\bf Example}. Let us suppose that $K$ is any normed field of characteristic $0$, and that a two-dimensional formal map $F$ of the form (\ref{typical}) has eigenvalues $\lambda_1=2$ and $\lambda_2=16$. Since only one resonance relation exists ($\lambda_1^{4}=\lambda_2$), the above algorithm allows us to choose our map $G$ to be of the form $G(x,y)=(2x, 16y+Cx^{4})$. Since the map $G$ is polynomial, it is automatically analytic, and so the actual value of $C$ is mostly irrelevant in that context (and can be altered by a linear change of variable - see Proposition \ref{PD Uniqueness}). From the above algorithm, we obtain a family of formal maps $H$, dependent upon a single parameter (given by a choice of the value of $[H_{4}]^{2}_{(4,0)}$), all of which conjugate $F$ to the normal form $G$.\\ 

\noindent {\bf Example}. Let us suppose again that $n=2$, and that $K$ is any field of characteristic $0$. This time, let $F$ be a formal map of the form (\ref{typical}) with eigenvalues $\lambda_1=1$ and $\lambda_2=\lambda$ which is not a root of unity. As mentioned at the beginning of this subsection, infinitely many resonances are present in this case. Thus, our map $G$ will take the form of an infinite power series with nonzero coefficients $[G]^{1}_{(j,0)}$ and $[G]^{2}_{(k,1)}$. In this case, the Poincar\'{e}-Dulac algorithm yields a family of formal conjugating maps which is dependent on {\it infinitely many} parameters, each corresponding to a choice of resonant monomial. Note again that, {\it a priori}, no analyticity can be assumed for the formal normal form $G$, even when the original map $F$ is analytic.\\

\subsection{Further Normalizations of Poincar\'{e}-Dulac Forms}

For semi-non-resonant maps, the Poincar\'{e}-Dulac algorithm leaves infinitely many terms in each component. However, such maps may be further normalized formally to a polynomial form, as was done by the first author \cite{Jenkins}.
We state here these results in the two-variable case.  Under the appropriate hypothesis, it easily generalizes to $n+1$ variables (namely, if $DF_0=\diag(1,\lambda _{1},\ldots ,\lambda _{n})$, then we assume that no resonances exist between the eigenvalues $\lambda _{1},\ldots ,\lambda _{n}$ - such a set of eigenvalues is called ``essentially non-resonant''). 

We assume that our map $F$ takes the form
\begin{equation}\label{Poincare-Dulac form}
F(x,y)=(f(x),\lambda y(1+g(x))).
\end{equation}
Here $\lambda \neq 0$,  $f$ is tangent to the identity, and $g$ is a formal power series with $g(0)=0$.

Thus, when reducing $F$, we begin by reducing $\pi _{1}F=f$ to its (rational) formal normal form.
 From Proposition \ref{rational formal conjugacy}, choose $\rho \in R_j$, $\mu \in K$, and a power series $h(x)=x + \cdots$, so that if
\begin{equation*}
H_{0}(x,y)=(h(x),y),
\end{equation*}
then 
\begin{equation*}
H_0^{-1}\circ F\circ H_0=F_1,
\end{equation*}
 where
\begin{equation}\label{new Poincare-Dulac form}
F_1(x,y)=(f_{m,\rho,\mu}(x),\lambda y(1+g_1(x))).
\end{equation}
Here  $g_1 =g\circ h\in K[|x|]_0$.  Thus, we may assume that $\pi_1F=f_{m,\rho,\mu}$.

Next is the significant step of reducing the second component to $\lambda y (1+r(x))$, where $r$ is a {\it polynomial} whose degree is less than $m$.  One does this with maps of the form $K(x,y)=(x,yk(x))$, where $k(0)=1$.  This algorithm will be explained in the proof of Theorem \ref{Jenkins Form}.  

\subsection{Uniqueness of Normal Forms}
In this section we treat the uniqueness of the formal normal forms of this paper.

\begin{prop} \label{PD Uniqueness} Let $\lambda \in K$ and $n \geq 2$.   Suppose that $\lambda^n \neq \lambda$. Put $F_0(x,y)=(\lambda x, \lambda^n y+ x^n)$.
\begin{enumerate}
\item Let $F(x,y)=(\lambda x, \lambda^n y+ Cx^n)$ with $C \neq 0$.  Then $F$ is analytically equivalent to $F_0$.
\item $F_0$ is not formally linearizable.
\end{enumerate}
\end{prop}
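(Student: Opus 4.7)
For part (i), my plan is to exhibit an explicit linear conjugation. The natural candidate is $L(x,y)=(x,Cy)$, which is analytic (in fact linear) and invertible since $C\neq 0$. A direct computation gives
\begin{equation*}
L\circ F_0\circ L^{-1}(x,y)=L\bigl(\lambda x,\lambda^n y/C+x^n\bigr)=(\lambda x,\lambda^n y+Cx^n)=F(x,y),
\end{equation*}
so $F$ and $F_0$ are conjugate by the linear (hence analytic) map $L$. This is routine.

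For part (ii), suppose for contradiction that there is an invertible formal map $\Phi\in\GG_2$ conjugating $F_0$ to its linear part $L_0(x,y)=(\lambda x,\lambda^n y)$, i.e.\ $\Phi\circ F_0=L_0\circ\Phi$. Taking linear parts at the origin gives $D\Phi_0\cdot\diag(\lambda,\lambda^n)=\diag(\lambda,\lambda^n)\cdot D\Phi_0$. Since the eigenvalues $\lambda$ and $\lambda^n$ are assumed distinct, this forces $D\Phi_0=\diag(\alpha,\beta)$ for some $\alpha,\beta\in K^\times$. Write $\Phi=(\phi_1,\phi_2)$ with $\phi_2(x,y)=\beta y+\sum_{i+j\geq 2}b_{ij}x^iy^j$.

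The second component of the conjugacy equation reads $\phi_2(\lambda x,\lambda^n y+x^n)=\lambda^n\phi_2(x,y)$. Expanding via the binomial theorem,
\begin{equation*}
\sum_{i,j}b_{ij}\lambda^i x^i\sum_{k=0}^{j}\binom{j}{k}\lambda^{n(j-k)}y^{j-k}x^{nk}=\lambda^n\sum_{i,j}b_{ij}x^iy^j.
\end{equation*}
I will then extract the coefficient of the pure monomial $x^n$ (i.e.\ $y^0$). On the left this requires $j=k$ and $i=n(1-k)$, so only $(i,j,k)=(n,0,0)$ and $(0,1,1)$ contribute, giving $b_{n,0}\lambda^n+\beta$. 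On the right it is $\lambda^n b_{n,0}$. Equating yields $\beta=0$, contradicting invertibility of $D\Phi_0$.

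The main conceptual step, and the only one requiring any thought, is the observation that the linear part of any conjugating $\Phi$ must itself be diagonal (using $\lambda\neq\lambda^n$); everything else is a direct coefficient comparison isolating the one resonant monomial $x^n$ in the second coordinate. I expect no real obstacle: the entire argument is the standard demonstration that a resonant monomial with nonzero coefficient obstructs linearization.
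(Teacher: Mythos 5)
Your proposal is correct and follows essentially the same route as the paper: part (i) via the diagonal linear map $L(x,y)=(x,Cy)$ (which the paper leaves to the reader), and part (ii) by noting $D\Phi_0$ must be diagonal since $\lambda\neq\lambda^n$ and then comparing the $x^n$-coefficients in the second component to force $\beta=0$. No gaps.
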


\begin{proof}
The first statement can be demonstrated with a linear diagonal map, which we leave to the reader to discover.  

We prove the second statement.  Write $L$ for the linear map $L(x,y)=(\lambda x, \lambda^n y)$.
Suppose that $\Phi \in \F_2$ is a  two-dimensional map with $\Phi \circ F_0=L \circ \Phi$.  Taking derivatives shows that $D \Phi_0$ commutes with $L$, and is thus diagonal.

Therefore we may write
\begin{equation*}
\Phi(x,y)=\left(ax+ \sum_{ij} a_{ij} x^i y^j, by+ \sum_{ij} b_{ij} x^i y^j \right),
\end{equation*}
with $a,a_{ij},b,b_{ij} \in K$.
  One finds that
\begin{equation*}
[ \Phi \circ F_0 ]^2_{(n,0)}=b + b_{n0} \lambda^n
\end{equation*}
and 
\begin{equation*}
[L \circ \Phi]^2_{(n,0)}=  b_{n0} \lambda^n.
\end{equation*}
It follows that $b=0$, so in particular $\Phi$ is not invertible.
\end{proof}

\begin{lemma} \label{Amanda} Let $f(x)=x+A x^m+ O(x^{2m-1})$, with $A \neq 0$, and suppose that $h \in K[|x|]_0$ is an invertible series which centralizes $f$.  Then $h(x) \equiv \zeta x \Mod x^m$, where $\zeta$ is an $(m-1)$-root of unity.
\end{lemma}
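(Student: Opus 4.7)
The plan is to write $h(x) = cx + \sum_{j \geq 2} c_j x^j$ with $c \neq 0$ (since $h$ is invertible), then extract two coefficient comparisons from the relation $h \circ f = f \circ h$: one at degree $m$ to force $c$ to be an $(m-1)$-th root of unity, and one at degree $k+m-1$ to force each intermediate coefficient $c_k$ ($2 \le k \le m-1$) to vanish.

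First I would compare coefficients of $x^m$. Using $f(x) = x + Ax^m + O(x^{2m-1})$, one checks that for $2 \le j \le m-1$, the power $f(x)^j$ contributes nothing to $x^m$ (its monomials at that degree would need $j = m$ or $j = 1$), so only $j = 1$ and $j = m$ matter. This gives $[h \circ f]_{x^m} = cA + c_m$. On the other hand, $[f \circ h]_{x^m} = c_m + A c^m$ since $h(x)^m \equiv c^m x^m \pmod{x^{m+1}}$ and the tail of $f$ only contributes from degree $2m-1$. Equating and dividing by the nonzero $A$ yields $c^{m-1} = 1$, so $c = \zeta$ for some $(m-1)$-th root of unity.

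Next, assume for contradiction that some $c_k \neq 0$ with $2 \le k \le m-1$, and let $k$ be the smallest such index. I would then compute the coefficient of $x^{k+m-1}$ on both sides. On the $h \circ f$ side: $cf(x)$ contributes $0$ (since $f$ has no terms between $x^m$ and $x^{2m-1}$ and $k+m-1 \le 2m-2$), while the expansion of $\sum_{j \ge k} c_j f(x)^j$ receives contributions only from $j = k+m-1$ (giving $c_{k+m-1}$) and from $j = k$ (giving $kAc_k$, from choosing one $Ax^m$ factor and $k-1$ copies of $x$). Thus $[h \circ f]_{x^{k+m-1}} = c_{k+m-1} + kAc_k$. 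On the $f \circ h$ side: a combinatorial count using the minimality of $k$ shows $[h(x)^m]_{x^{k+m-1}} = m c^{m-1} c_k$, so $[f \circ h]_{x^{k+m-1}} = c_{k+m-1} + A m c^{m-1} c_k$. Equating and using $c^{m-1} = 1$ gives $(k - m)Ac_k = 0$, which contradicts $c_k \neq 0$ since $k \neq m$ in the characteristic-zero field $K$.

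The only genuine obstacle is the bookkeeping in the second step: verifying that the two ``gap'' properties $f(x) \equiv x + Ax^m \pmod{x^{2m-1}}$ and $h$ starting with the leading term $c_k x^k$ (no earlier nonzero $c_j$) together kill every term in $[h \circ f]_{x^{k+m-1}}$ and $[(h(x))^m]_{x^{k+m-1}}$ except the two that produce the discrepancy $(k-m)Ac_k$. Once that cancellation is checked, the conclusion is immediate.
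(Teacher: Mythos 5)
Your proof is correct and follows essentially the same route as the paper's: a degree-$m$ coefficient comparison to get $c^{m-1}=1$, followed by a comparison at degree $k+m-1$ yielding the discrepancy $(k-m)Ac_k=0$ (the paper phrases this as an induction on the index rather than a minimal counterexample, but the computation is identical). No gaps.
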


\begin{proof}
 Let $h(x)=\sum_{n\geq 1}a_{n}x^{n}$, and with $h\circ f=f\circ h$.  Equating the $m$th coefficients gives
 \begin{equation}
 a_1A+a_m=Aa_1^m+a_m,
 \end{equation}
 and so $a_1=\zeta$ is an $(m-1)$-root of unity.  
 
 Next, we prove by induction on $2 \leq n < m$ that $a_n=0$.  Thus, writing $h(x)=\zeta x+ a_n x^n + \ldots$, one computes that
 \begin{equation}
 [h \circ f]_{m+ (n-1)}= a_{m+ (n-1)}+ n a_n A
 \end{equation}
 and
 \begin{equation}
 [f \circ h]_{m+ (n-1)}= a_{m+ (n-1)}+ Am a_n.
 \end{equation}
 This implies that $a_n=0$, as desired.
 
 \end{proof}
 
Note that if $\zeta$ is an $(m-1)$-root of unity, then the linear map $L_{\zeta}(x)=\zeta x$ does indeed centralize $f$.

\begin{define} A map 
\begin{equation*}
F(x,y)=(f(x), \lambda y(1+r(x)))
\end{equation*}
with $\lambda \neq 1$ is in PDJ-normal form if 
\begin{enumerate}
\item $f=f_{m,\rho,\mu}$ is in (rational) formal normal form
\item $\deg r < m$
\item $r(0)=0$.
\end{enumerate}
Write $F=F_{\lambda,f,r}$ for this map.

\end{define}

A PDJ-normal form is almost a formal invariant.

\begin{prop} (Uniqueness of PDJ-normal Form) Let $F$ and $G$ be maps in PDJ-normal form.  
Write $F=F_{\lambda,f,r}$, and $G=F_{\lambda', g,r'}$.  Suppose that $F$ and $G$ are formally equivalent.  Then $\lambda'=\lambda$, $f=g$, and there is an $(m-1)$-root of unity $\zeta$ so that $r(x)=r'(\zeta x)$.
\end{prop}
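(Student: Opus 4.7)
The strategy is to write $\Phi = (\phi,\psi)$ for a formal conjugacy, $\Phi \circ F = G \circ \Phi$, and unpack the two component equations
\begin{equation*}
\phi(f(x),\lambda y(1+r(x))) = g(\phi(x,y)), \qquad \psi(f(x),\lambda y(1+r(x))) = \lambda' \psi(x,y)(1+r'(\phi(x,y))).
\end{equation*}
The linear parts of $F$ and $G$ are $\mathrm{diag}(1,\lambda)$ and $\mathrm{diag}(1,\lambda')$; comparing entries in $D\Phi_0 \cdot \mathrm{diag}(1,\lambda) = \mathrm{diag}(1,\lambda') \cdot D\Phi_0$ and using $\lambda, \lambda' \neq 1$ forces $\lambda' = \lambda$ and $D\Phi_0 = \mathrm{diag}(a,b)$ with $a,b \neq 0$. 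Throughout I use that in the PDJ-context $\lambda$ is not a root of unity (otherwise the Poincar\'{e}--Dulac reduction fails to produce a polynomial $r$), so $\lambda^k \neq 1$ for all $k \geq 1$.

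Next I show that $\phi$ depends only on $x$ and that $\psi$ is divisible by $y$. For the latter, substitute $y=0$ in the second equation to obtain $\psi(f(x),0) = \lambda \psi(x,0)(1+r'(\phi(x,0)))$; since $f(x) = x+O(x^m)$ and $r'(\phi(x,0)) = O(x)$, comparing the lowest nonzero power of $x$ on each side yields a relation $c = \lambda c$, so $\psi(x,0)=0$ and $\psi = y \tilde\psi$ with $\tilde\psi(0,0) = b \neq 0$. For $\phi$, expand $\phi(x,y) = \phi_0(x) + y^k \phi_k(x) + O(y^{k+1})$ with $k \geq 1$ minimal such that $\phi_k \not\equiv 0$. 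Matching the coefficient of $y^k$ in the first equation gives $\lambda^k(1+r(x))^k \phi_k(f(x)) = g'(\phi_0(x))\, \phi_k(x)$, and since $(1+r(x))^k = 1 + O(x)$ and $g'(\phi_0(x)) = 1+O(x^{m-1})$, reading off the leading $x$-coefficient of $\phi_k$ forces $\lambda^k = 1$, a contradiction. Hence $\phi$ depends only on $x$.

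With $\phi = \phi(x)$ the first equation reduces to $\phi \circ f = g \circ \phi$. Since $f$ and $g$ are both in rational formal normal form with $\rho, \rho' \in R_{m-1}$, Proposition \ref{rational formal conjugacy} and our choice of coset representatives force $f = g$, and then Lemma \ref{Amanda} gives $\phi(x) \equiv \zeta x \pmod{x^m}$ for some $(m-1)$-root of unity $\zeta$. Writing $\psi = y\tilde\psi$ in the second equation, dividing by $\lambda y$, and setting $y = 0$ gives
\begin{equation*}
(1+r(x))\, u(f(x)) = u(x)\,(1 + r'(\phi(x)))
\end{equation*}
for the unit $u(x) := \tilde\psi(x,0)$. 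Reducing mod $x^m$: since $f(x) \equiv x$ and $\phi(x) \equiv \zeta x \pmod{x^m}$, and $\deg r' < m$, one gets $u(f(x)) \equiv u(x)$ and $r'(\phi(x)) \equiv r'(\zeta x) \pmod{x^m}$, so cancelling $u(x)$ yields $r(x) \equiv r'(\zeta x) \pmod{x^m}$; since both sides are polynomials of degree less than $m$, equality holds. The main obstacle is the second step (the $y$-independence of $\phi$), which relies on the non-resonance $\lambda^k \neq 1$ for \emph{every} $k \geq 1$; once that rigidity is in place, everything else is essentially mechanical.
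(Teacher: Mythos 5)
Your proof is correct, and its overall route coincides with the paper's: reduce to $h\circ f=g\circ h$ to get $f=g$, invoke Lemma \ref{Amanda} to get $h(x)\equiv\zeta x \Mod x^m$, then read the second component modulo $x^m$ and cancel the unit $k(x)$ to conclude $r(x)=r'(\zeta x)$. The one genuine difference is at the start: the paper simply cites Lemma 2.1 of \cite{Jenkins} to assert that any formal conjugacy has the rigid form $\Phi(x,y)=(h(x),yk(x))$, whereas you derive the two facts actually needed --- that $\pi_1\Phi$ is independent of $y$ and that $\pi_2\Phi$ is divisible by $y$ --- directly from the conjugacy equation, by isolating the coefficient of the lowest power of $y$ (respectively setting $y=0$) and comparing leading $x$-coefficients to force $\lambda^k=1$. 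This makes the argument self-contained, and your observation that one never needs $\tilde\psi$ to depend only on $x$ (setting $y=0$ already yields the unit $u(x)=\tilde\psi(x,0)$ entering the final congruence) is a nice economy. The only caveat worth flagging: the $y$-independence of $\pi_1\Phi$ uses $\lambda^k\neq 1$ for every $k\geq 1$, which is stronger than the bare hypothesis $\lambda\neq 1$ in the definition of PDJ-normal form; you state this assumption explicitly, and it holds in every context where the proposition is applied in the paper (the semihyperbolic case has $|\lambda|\neq 1$), and the paper's own citation of \cite{Jenkins} implicitly carries the same essential non-resonance hypothesis, so this is not a gap in practice.
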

\begin{proof}
Since $F$ and $G$ are formally equivalent, $\lambda=\lambda'$.
 
Suppose that $\Phi$ is an invertible map with
\begin{equation} \label{amanda}
\Phi \circ F= G \circ \Phi.
\end{equation}
By Lemma 2.1 of \cite{Jenkins}, we may write $\Phi(x,y)=(h(x), y k(x))$ for an invertible power series $h \in K[|x|]_0$ and $k \in K[|x|]$, with $k(0) \neq 0$.  The first component of (\ref{amanda}) gives
$h \circ f= g \circ h$.  Since $f$ and $g$ are in normal form, we have $f=g$ and $h$ centralizes $f$.  Say $f(x)=x+\rho x^m+ \mu x^{2m-1}$, with $\rho \in R_{m-1}$.
The second component of (\ref{amanda}) gives
\begin{equation*}
(1+r(x))(k \circ f)(x)= k(x)(1+(r' \circ h)(x)).
\end{equation*}
Reading this equation modulo $x^m$ gives
\begin{equation*}
(1+r(x))k(x) \equiv k(x) (1+ r'(\zeta x)),
\end{equation*}
where $\zeta$ is an $(m-1)$-root of unity.  We have used the Lemma \ref{Amanda} above.
Since $k(0) \neq 0$, we may multiplicatively invert $k(x)$ modulo $x^m$.  It follows that $r(x) \equiv r'(\zeta x)$ modulo $x^m$.  Since $\deg(r(x)), \deg(r'(\zeta x)) < m$, it follows that they are equal.
\end{proof}
 
\begin{cor} \label{PDJ corollary} Let $F$ and $G$ be maps in PDJ-normal form.  If $F$ and $G$ are formally equivalent, then they are analytically equivalent.
\end{cor}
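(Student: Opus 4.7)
The plan is to leverage the uniqueness proposition for PDJ-normal forms proved immediately above, which pins down $F$ and $G$ so tightly that the analytic conjugacy can be written down by hand as a linear map. Specifically, write $F=F_{\lambda,f,r}$ and $G=F_{\lambda',g,r'}$ and apply the uniqueness statement to conclude that $\lambda'=\lambda$, $g=f$, and $r(x)=r'(\zeta x)$ for some $(m-1)$-root of unity $\zeta\in K$, where $m$ is the degree of the leading nonlinear term of $f$.

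The next step is simply to exhibit the conjugating map
\begin{equation*}
\Phi(x,y)=(\zeta x,y).
\end{equation*}
This is a linear (in particular analytic and invertible) map in $\GG_2$. I would then check the two components of $\Phi\circ F=G\circ\Phi$ separately. The second coordinate works out because
\begin{equation*}
\pi_2(\Phi\circ F)(x,y)=\lambda y(1+r(x))=\lambda y(1+r'(\zeta x))=\pi_2(G\circ\Phi)(x,y),
\end{equation*}
using the relation $r(x)=r'(\zeta x)$ supplied by uniqueness. The first coordinate reduces to the identity $\zeta f(x)=f(\zeta x)$, which in turn boils down to the fact that the only exponents appearing in $f(x)=x+\rho x^m+\mu x^{2m-1}$ are $1$, $m$, and $2m-1$; since $\zeta^{m-1}=1$ we have $\zeta^m=\zeta$ and $\zeta^{2m-1}=\zeta$, so $f(\zeta x)=\zeta x+\zeta\rho x^m+\zeta\mu x^{2m-1}=\zeta f(x)$.

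There is essentially no obstacle here: the serious work was already done in the uniqueness proposition for PDJ-normal forms, and the corollary is a purely formal consequence together with the observation that the intertwining root of unity $\zeta$ can be realized by the honest linear (hence analytic) substitution $x\mapsto\zeta x$. The only thing worth double-checking is that this $\Phi$ is in $\GG_2$ and not merely a formal germ, which is clear since $\zeta\in K^\times$ has $|\zeta|=1$ (it is a root of unity in a non-archimedean field), so $\Phi$ converges on all of $K^2$.
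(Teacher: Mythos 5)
Your proof is correct and follows the same route as the paper: invoke the uniqueness proposition to get $\lambda'=\lambda$, $f=g$, and $r(x)=r'(\zeta x)$, then realize the root of unity $\zeta$ by the linear map $(x,y)\mapsto(\zeta x,y)$, which is automatically analytic. The paper leaves the verification of the intertwining identity to the reader; your check of the first coordinate via $\zeta^m=\zeta$ and $\zeta^{2m-1}=\zeta$ is exactly the computation being elided.
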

\begin{proof}  Suppose that $p(x)=q(\zeta x)$ for some $(m-1)$-root of unity $\zeta$. Let $H(x,y)=(\zeta x,y)$.  It is easy to see that
\begin{equation*}
H \circ F_{\lambda, f, r}= F_{\lambda,f,r'} \circ H.
\end{equation*}
\end{proof}

\section{Repelling Fixed Points with Resonance}\setcounter{equation}{0}\label{attracting and repelling maps!}

In this section we prove the Repelling Case of Theorem \ref{formal->analytic}.  We do this by demonstrating that the formal conjugating maps constructed in Section \ref{Preliminaries} are analytic in this case.  The technique here is as follows:   We pick a dynamic function $\tau$ to bound the coefficients of the conjugating maps.  This function will govern the coefficients of all $H_m$; since $\tau$ is dynamic it will automatically govern the coefficients of the composition $\Phi_m$.  The real work is to check that the functional equation at each stage yields the $\tau$-estimate on $H_{m+1}$ from the $\tau$-estimate on $\Phi_m$.  This implies the $\tau$-estimate on the limit $\Phi$, and therefore this limit is analytic.
 
We assume that the map $F=F(x,y)$ is analytic in a neighborhood of $0$ with eigenvalues $\lambda $ and $\lambda ^{n}$ at $0$, where $|\lambda |> 1$ and $n\geq 2$. After a linear change of variable, we can write
\begin{equation*}
F(x,y)=\left(\lambda x+\sum_{i+j\geq 2}a_{ij}x^{i}y^{j},\lambda ^{n}y+\sum_{i+j\geq 2}b_{ij}x^{i}y^{j} \right).
\end{equation*}
In this case, the Poincar\'{e}-Dulac form is polynomial, and takes the form
\begin{equation}\label{attracting Poincare-Dulac form}
F_{0}(x,y)=(\lambda x,\lambda ^{n}y+bx^{n}).
\end{equation}
By the formal theory, through a polynomial change of variable, we may assume that
\begin{equation}\label{attracting prenormal form}
F(x,y)=\left (\lambda x+\sum_{i+j\geq n+1}a_{ij}x^{i}y^{j},\lambda ^{n}y+bx^{n}+\sum_{i+j\geq n+1}b_{ij}x^{i}y^{j} \right).
\end{equation}
Since this series is convergent, we may make a final linear change of variable via Lemma \ref{q-change}, and assume that $a_{ij},b$ and $b_{ij}$ are integers for all $(i,j)$ with $i+j\geq n+1$. (Recall that integers are defined as the elements in $K$ of norm bounded by $1$.)  This assumption will be in place throughout this paragraph. 
We construct  polynomials $H_{m}$  for $m \geq n+1$ through the formula (\ref{Poincare-Dulac formula}).

\begin{define} Let $\GG_{\REPEL}=\GG_T(\tau)$, as defined as in Proposition \ref{mixed}, using this choice of $\lambda$ and $n$.
\end{define}
Recall that for $G \in \GG_{\REPEL}$, we have
\begin{enumerate}
\item $\lambda^{i+nj} [G]^1_{(i,j)} \in \Delta$,
\item $\lambda^{\max(n,i+nj)}[G]^2_{(i,j)} \in \Delta$.
\end{enumerate}

By Proposition \ref{group under composition}, $\GG_{\REPEL}$ is a group.  We will inductively show that each $H_m \in \GG_{\REPEL}$.

\begin{prop}\label{attracting c_{ij} estimate}
Let $F$ be an analytic germ of  the form (\ref{attracting prenormal form}) with  $a_{ij},b$ and $b_{ij}$ integers. Let $H_{m}$  be defined as before for all $m\geq n+1$. Then $H_m \in \GG_{\REPEL}$.
\end{prop}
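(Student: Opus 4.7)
The plan is to prove the proposition by induction on $m \geq n+1$, with a strengthened inductive hypothesis that simultaneously gives (a) $H_m \in \GG_{\REPEL}$, and (b) $[F_m]^k_{(i,j)} \in \Delta$ for all $i+j \geq m+1$ and $k=1,2$, where $F_m = \Phi_m \circ F \circ \Phi_m^{-1}$.  The base case $m = n+1$ follows from the hypothesis that the tilde part of $F = F_n$ has integer coefficients above degree $n$, together with the denominator estimate below.

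The mechanism driving part (a) is the following ultrametric denominator estimate: for $|\lambda| > 1$ and $(i,j)$ with $i+j \geq n+1$, we have $i + nj \geq i + j \geq n+1 > n$, and so $|\lambda_k - \lambda^{i+nj}| = |\lambda|^{i+nj}$ for both $k=1,2$.  Combined with the integrality of $[F_{m-1}]^k_{(i,j)}$ assumed by the inductive hypothesis, this gives $|[H_m]^k_{(i,j)}| \leq |\lambda|^{-(i+nj)}$ directly from (\ref{Poincare-Dulac formula}), which is the $\GG_{\REPEL}$ bound since $\max(n,i+nj) = i+nj$ in this range.  The group property from Proposition \ref{mixed} then also yields $\Phi_m \in \GG_{\REPEL}$.

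The main obstacle is part (b): showing that the conjugation $F_m = H_m \circ F_{m-1} \circ H_m^{-1}$ preserves integrality of the tilde part, so the induction can propagate.  I will extract this as a sublemma: if $H \in \GG_{\REPEL}$ and $F'$ has linear part $L = \diag(\lambda, \lambda^n)$ together with integer higher-order coefficients, then so does $H \circ F' \circ H^{-1}$.  The key algebraic observation is that the $\GG_{\REPEL}$ bound on $H$ is equivalent to $H \circ L$ having integer coefficients, since $[H \circ L]^k_{(i,j)} = \lambda^{i+nj}[H]^k_{(i,j)}$, and the definition of $\GG_{\REPEL}$ was designed precisely to make these scaled coefficients integer.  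More generally, $H(L(x,y) + \tilde G(x,y))$ is integer whenever $\tilde G$ is, because binomial expansion of each factor $(\lambda x + \tilde G^1)^i(\lambda^n y + \tilde G^2)^j$ produces terms $\binom{i}{p}\binom{j}{q}[H]^k_{(i,j)}\lambda^{(i-p)+n(j-q)}x^{i-p}y^{j-q}(\tilde G^1)^p(\tilde G^2)^q$, and the scalar $[H]^k_{(i,j)}\lambda^{(i-p)+n(j-q)}$ still has norm $\leq 1$ by the same arithmetic.

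Using the sublemma, one first writes $F_{m-1} \circ H_m^{-1} = L + (\text{integer higher order})$ via $H_m^{-1} \in \GG_{\REPEL}$ (which controls $L \circ H_m^{-1}$) and the ultrametric fact that composition of integer-coefficient series is integer-coefficient (which controls $\tilde F_{m-1} \circ H_m^{-1}$); then applying $H_m$ on the left is an instance of the key computation above.  Combined with the PD construction, which by design zeros out the tilde part of $F_m$ in degrees between $n+1$ and $m$, this delivers $[F_m]^k_{(i,j)} \in \Delta$ for $i+j \geq m+1$ and closes the induction.  The technical heart of the argument is the bookkeeping in the binomial expansion above, in particular the case split for $k=2$ between $i+nj \geq n$ and $i+nj < n$, where $\max(n,i+nj)$ is realized differently but each case still produces a nonpositive exponent of $|\lambda|$.
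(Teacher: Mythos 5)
Your argument is correct, but it is organized genuinely differently from the paper's. The paper never forms the intermediate conjugates $F_m=\Phi_m\circ F\circ\Phi_m^{-1}$: it works with the one-sided relation $F_0\circ\Phi_{m+1}=\Phi_{m+1}\circ F$, derives $c_{ij}(\lambda-\lambda^{i+nj})=[\Phi_m\circ F-\lambda\Phi_m]^1_{(i,j)}$ (and the analogue for $d_{ij}$, with the extra term $b(\pi_1\Phi_m)^n$), and shows the right-hand side is integral by factoring $\pi_1F=\lambda f_1$, $\pi_2F=\lambda^nf_2$ with $f_1,f_2$ integral, so that each term $A_{\alpha\beta}(\pi_1F)^{\alpha}(\pi_2F)^{\beta}=\lambda^{\alpha+n\beta}A_{\alpha\beta}f_1^{\alpha}f_2^{\beta}$ is integral by the $\GG_{\REPEL}$ bound on $\Phi_m$; the only inductive input is $\Phi_m\in\GG_{\REPEL}$, supplied by the group property. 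You instead carry the strengthened hypothesis that $\tilde F_m$ stays integral, so that the one-step Poincar\'e--Dulac formula together with the ultrametric identity $|\lambda_k-\lambda^{i+nj}|=|\lambda|^{i+nj}$ (valid because $i+nj\geq i+j\geq n+1>n$) hands you the $\GG_{\REPEL}$ bound on $H_{m+1}$ immediately; the cost is your conjugation sublemma. That sublemma is sound: the scalar $\binom{i}{p}\binom{j}{q}[H]^k_{(i,j)}\lambda^{(i-p)+n(j-q)}$ has norm at most $|\lambda|^{-(p+nq)}\leq 1$ since $\max(n,i+nj)\geq i+nj$, and the pieces $L\circ H_m^{-1}$ and $\tilde F_{m-1}\circ H_m^{-1}$ are controlled as you indicate (this does use closure of $\GG_{\REPEL}$ under inversion, so Theorem \ref{group under composition} is still needed). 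In effect your sublemma performs, on both sides of the conjugation, the same expansion the paper performs once on $\Phi_m\circ F$. What your route buys is a reusable statement --- conjugation by an element of $\GG_{\REPEL}$ preserves integrality of the nonlinear part, which is the repelling-case analogue of Lemma \ref{F_{0} coefficients} --- and a completely transparent match between the resonance denominators and the defining exponents of $\GG_{\REPEL}$. What the paper's route buys is lighter bookkeeping: no intermediate conjugates and no separate integrality claim to propagate through the induction.
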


\begin{proof}
We apply induction on $m$, where $m=i+j$ is the total degree of any term of the homogeneous polynomial map $H_{m}-Id$.  Note that $H_{n+1} \in \GG_{\REPEL}$ (using the Poincar\'{e}-Dulac formula (\ref{Poincare-Dulac formula})).   So assume $H_i \in \GG_{\REPEL}$ for  $n+1 \leq i \leq m$; we argue now that $H_{m+1} \in \GG_{\REPEL}$.
Since $\GG_{\REPEL}$ is closed under composition, we have $\Phi_m \in \GG_{\REPEL}$.
 
We write $\Phi _{m+1}=H_{m+1}\circ \Phi _{m}$, where
\begin{equation*}
H_{m+1}(x,y)=\left(x+\sum_{i+j=m+1}c_{ij}x^{i}y^{j}, y+\sum_{i+j=m+1}d_{ij}x^{i}y^{j} \right),
\end{equation*}
and $\Phi _{m}(x,y)=H_{m}\circ H_{m-1}\circ \cdots \circ H_{n+1}$ is written
\begin{equation*}
\Phi_{m}(x,y)=\left(x+\sum_{i+j\geq n+1} A_{ij}x^{i}y^{j}, y+\sum_{i+j\geq n+1}B_{ij}x^{i}y^{j} \right).
\end{equation*}
Since $\Phi_m \in \GG_{\REPEL}$, we have
\begin{equation} \label{I am the night}
\lambda ^{i+nj}A_{ij}\in \Delta ,\ \ \ \ \ \ \ \ \lambda ^{\max ( n, i+nj) }B_{ij}\in \Delta .
\end{equation}

The coefficients $c_{ij}$ (resp. $d_{ij}$), where $i+j=m+1$, are determined by the equation
\begin{equation*}
[F_0 \circ \Phi_{m+1}]^k_{(i,j)}=[\Phi_{m+1} \circ F]^k_{(i,j)},
\end{equation*}
for $k=1$ (resp. $k=2$).

Let $k=1$.  By Lemma \ref{useful}, we obtain
\begin{equation*}
\lambda[ \Phi_{m}]^1_{(i,j)}+\lambda c_{ij}=[\Phi_{m} \circ F]^1_{(i,j)}+ c_{ij}\lambda^i\lambda^{nj}.
\end{equation*}
Thus,
\begin{equation}\label{repelling c_{ij} equation}
c_{ij}(\lambda -\lambda ^{i+nj})=[\Phi _{m}\circ F-\lambda \Phi _{m}]^{1}_{(i,j)}.
\end{equation}
We want to show that the right-hand side of (\ref{repelling c_{ij} equation}) is always an integer.  Expanding this out gives
\begin{equation*}
\left[(\pi_1F-\lambda x) +\sum A_{\alpha \beta}(\pi_{1}F)^{\alpha }(\pi_{2}F)^{\beta }-\lambda \sum A_{\alpha \beta }x^{\alpha }y^{\beta } \right]_{(i,j)}.
\end{equation*}
 We may write $\pi_1F= \lambda f_1$, and $\pi_2F= \lambda^n f_2$, where $f_1$ and $f_2$ have integer coefficients.  Then for each $\alpha$, $\beta$, we have
 \begin{equation*}
 A_{\alpha \beta}(\pi_{1}F)^{\alpha }(\pi_{2}F)^{\beta }=\lambda^{\alpha+n \beta}A_{\alpha \beta} f_1 f_2,
 \end{equation*}
 which is integral by (\ref{I am the night}).  Since $\alpha+n\beta \geq 1$, we also have
 $\lambda A_{\alpha \beta} \in \Delta$.  Thus this expression is integral and $\lambda^{i+nj} c_{ij} \in \Delta$, as desired.

Let $k=2$.  By Lemma \ref{useful} again, we obtain
\begin{equation*}
\lambda^n [\Phi_{m}]^2_{(i,j)}+ \lambda^n d_{ij}+b [ (\pi_1 \Phi_m)^n]_{(i,j)}= [\Phi_m \circ F]^2_{(i,j)}+ d_{ij} \lambda^i \lambda^{nj}.
\end{equation*}
Thus,
\begin{equation*}
d_{ij}(\lambda ^{n}-\lambda ^{i+nj})=[\pi_{2}(\Phi_{m}\circ F)-\lambda ^{n}\pi_{2}\Phi_{m}-b(\pi_{1}\Phi_{m})^{n}]_{(i,j)}.
\end{equation*}
We can rewrite the right-hand side of the above equation as
\begin{equation*}
[(\pi _{2}F-\lambda ^{n}y)+\sum B_{\alpha \beta }(\pi _{1}F)^{\alpha }(\pi_{2}F)^{\beta }-\lambda ^{n}\sum B_{\alpha \beta }x^{\alpha }y^{\beta }-b(\pi_{1}\Phi_{m})^{n}]_{(i,j)}.
\end{equation*}
The terms are handled as in the previous case, thus $\lambda^{\max(i+jn,n)}d_{ij} \in \Delta$.
\end{proof}

\begin{prop} If $F \in \F_2$ has eigenvalues $\lambda$ and $\lambda^n$ with $n \geq 2$ and $|\lambda| > 1$,  then $F$ is analytically equivalent to its PD-form.
\end{prop}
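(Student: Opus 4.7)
The plan is to assemble the formal composition $\Phi=\lim_{m\to\infty}\Phi_m$ from the homogeneous corrections $H_m$ constructed in Proposition \ref{attracting c_{ij} estimate}, and to show that the coefficient estimates already established force $\Phi$ to be an analytic conjugacy between $F$ and its PD-form $F_0$ of (\ref{attracting Poincare-Dulac form}). First I would reduce to the normalized setting: by the formal polynomial change of variable described before Proposition \ref{attracting c_{ij} estimate}, $F$ can be brought into the form (\ref{attracting prenormal form}), and by Lemma \ref{q-change} applied once more we may further assume that the coefficients $a_{ij}$, $b$, $b_{ij}$ all lie in $\Delta$. Both reductions are polynomial or linear, hence analytic, so it is enough to establish analytic conjugacy to the PD-form in this normalized case. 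Because each $H_{m+1}-\Id\in\F[m+1]$, for every multi-index $\vec a$ the coefficient $[\Phi_m]^k_{\vec a}$ is eventually independent of $m$, so the limit $\Phi\in\F_2$ is a well-defined formal series with $D\Phi_0=\Id$.

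The main step is to promote $\Phi$ from formal to analytic. Proposition \ref{attracting c_{ij} estimate} places each $H_m$ in $\GG_{\REPEL}$, and because $\GG_{\REPEL}$ is a group under composition (Proposition \ref{mixed}), every finite composition $\Phi_m$ also lies in $\GG_{\REPEL}$. Membership in $\GG_{\REPEL}$ is a coefficient-by-coefficient inequality, so the stability of coefficients in $m$ passes the estimate to the limit, giving $\Phi\in\GG_{\REPEL}$; explicitly,
\begin{equation*}
|[\Phi]^1_{(i,j)}|\leq|\lambda|^{-(i+nj)},\qquad|[\Phi]^2_{(i,j)}|\leq|\lambda|^{-\max(n,i+nj)}.
\end{equation*}
Since $n\geq 2$ and $|\lambda|>1$, both right-hand sides are majorized by $(1/|\lambda|)^{i+j}$, and the geometric-growth lemma of Section \ref{Preliminaries} then yields analyticity of $\Phi$ on a polydisc around $0$.

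Finally, the Poincar\'e--Dulac formula (\ref{Poincare-Dulac formula}) was designed so that $\Phi_m\circ F\equiv F_0\circ\Phi_m\pmod{\F[m+1]}$; letting $m\to\infty$ degree by degree gives the formal identity $\Phi\circ F=F_0\circ\Phi$. Since $\Phi\in\GG_{\REPEL}$ and $\GG_{\REPEL}$ is a group, $\Phi^{-1}\in\GG_{\REPEL}$ as well, so $\Phi^{-1}$ is analytic on a polydisc by exactly the same estimate, and we obtain the desired analytic conjugacy. The principal technical work was already carried out in Proposition \ref{attracting c_{ij} estimate}; what is left here is essentially the observation that the \emph{dynamic} design of $\GG_{\REPEL}$ is precisely what allows the coefficient bounds to survive the infinite composition and the passage to the limit.
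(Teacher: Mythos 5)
Your proposal is correct and follows the paper's own argument: invoke Proposition \ref{attracting c_{ij} estimate} to place each $H_m$ in $\GG_{\REPEL}$, use the group property to get $\Phi_m$, $\Phi$, and $\Phi^{-1}$ in $\GG_{\REPEL}$, and conclude analyticity from the resulting coefficient bounds (the paper simply notes the coefficients are integers, while you record the sharper geometric decay — either suffices for the convergence lemma). The extra details you supply (stabilization of coefficients as $m\to\infty$ and the passage of the formal identity $\Phi_m\circ F\equiv F_0\circ\Phi_m$ to the limit) are exactly the steps the paper leaves implicit.
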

\begin{proof}
By the previous proposition, we have $H_m$ and therefore $\Phi_m \in \GG_{\REPEL}$ for all $m$.  It follows that $\Phi$ and $\Phi^{-1}$ lie in $\GG_{\REPEL}$.  In particular, the coefficients of these functions are integers.  Thus $F$ is analytically conjugate to its normal form. 
\end{proof}

{\it Proof of Repelling Case of Theorem \ref{formal->analytic}}.
Suppose that $F$ and $G$ are formally equivalent mappings of the form (\ref{typical}) with the same eigenvalues $\lambda$,  and $\lambda^n$, with $n \geq 2$ and $|\lambda| > 1$.    By the previous proposition, we may assume $F(x,y)=(\lambda x, \lambda^n y+ C_1x^n)$ and $G(x,y)=(\lambda x, \lambda^n y+ C_2x^n)$ for some $C_1,C_2 \in K$.  By Proposition \ref{PD Uniqueness}, two such maps are formally equivalent if and only if they are analytically equivalent.  \qed

\section{Semi-hyperbolic mappings}\setcounter{equation}{0}\label{semi-hyperbolic maps!}
We now consider semihyperbolic mappings. This section and the next are devoted to a proof that any map of the form
\begin{equation}\label{diagonal semi-hyperbolic}
F(x,y)=(x+O(2),\lambda y+O(2))
\end{equation}
for which $|\lambda |\neq 1$ is analytically equivalent to its polynomial normal form
\begin{equation}
F_r(x,y)=(f_{m,\rho, \mu }(x),\lambda y(1+r(x)))
\end{equation}
as described in Section \ref{Preliminaries}.

In this section, we will prove that the general semi-hyperbolic mapping (\ref{diagonal semi-hyperbolic}) is analytically equivalent to its PD-form (\ref{Poincare-Dulac form}). The method is similar to that of the previous section.  Later, we show that any mapping in its PD-form is analytically equivalent to its PDJ-form.

We assume that $F$ is an analytic mapping of two variables fixing the origin whose eigenvalues at $0$ are $1$ and $\lambda$ satisfying $1<|\lambda |$.  (For the case $0 < |\lambda| <1$, one may consider the inverse $F^{-1}$.)  After a linear change of variable, we will assume that $DF_0$ is diagonal, and that all higher-degree terms have integer coefficients. That is, we have
\begin{equation}\label{general semi-hyperbolic}
F(x,y)=\left( x+\sum_{j+k=2}^{\infty }a_{jk}x^{j}y^{k}, \lambda y+\sum_{j+k=2}^{\infty}b_{jk}x^{j}y^{k}\right) ,
\end{equation}
 where $a_{ij},b_{ij}\in \Delta $ for all $(i,j)$ with $i+j\geq 2$. 

 Our goal is to show that the formal techniques of the Poincar\'{e}-Dulac theory yield both analytic normal forms as well as analytic intertwining maps.  The PD-form of $F$ has the form $F_{0}(x,y)=(f(x),\lambda y(1+g(x)))$; let us write this as
 \begin{equation}
 F_0(x,y)=\left( x+\sum_{j=2}^{\infty}a_j^0x^j,\lambda y \left(1+ \sum_{k=1}^{\infty} b_k^0 x^k \right)\right).
 \end{equation}
 
 The formal conjugating map $H$ is uniquely determined if we assume that it is tangent to the identity, and that $\pi_{1}H$ has no term of the form $c_{i0}x^{i}$, and $\pi_{2}H$ has no term of the form $d_{j1}x^{j}y$.

\begin{define} Let $\GG_{\SEMI}=\GG_T(\tau)$, as defined as in Proposition \ref{maxes}, using this choice of $\lambda$ and $n$.
\end{define}

Recall that for $G \in \GG_{\SEMI}$, and $k=1,2$, we have
\begin{equation*}
 \lambda^{\max(1,n)}[G]^k_{(i,j)} \in \Delta.
\end{equation*}

By Proposition \ref{group under composition}, $\GG_{\SEMI}$ is a group.

The PD-form $F_0$ adds a complication to the matter;  its construction is interlaced with the construction of $\Phi$.  We will inductively see that its coefficients are integral by the next lemma.  Recall that we construct $H_{m}$ and $\Phi _{m}$ for $m\geq 2$ so that $\Phi_{m}\circ F=F_{0}\circ \Phi_{m}$ modulo terms of total degree no less than $m+1$.  

\begin{lemma}\label{F_{0} coefficients}
Suppose that $F$ is written in the form (\ref{general semi-hyperbolic}) so that $\tilde F$ has integer coefficients. Suppose also that $\Phi_{m} \in \GG_{\SEMI}$. Then $\tilde F_{0}$ has integer coefficients modulo $\F[m+1]$.
\end{lemma}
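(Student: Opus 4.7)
The plan is to equate coefficients in the intertwining relation
\begin{equation*}
\Phi_m \circ F \equiv F_m \circ \Phi_m \Mod \F[m+1],
\end{equation*}
where $F_m = \Phi_m \circ F \circ \Phi_m^{-1}$ agrees with $F_0$ through degree $m$, and read off the resonant coefficients of $F_0$ inductively on total degree. Writing $\Phi_m(x,y) = (x + A(x,y),\, y + B(x,y))$ with $A, B \in I[2]$, the hypothesis $\Phi_m \in \GG_{\SEMI}$ together with Proposition \ref{maxes} gives the arithmetic bound $\lambda^{\max(1, q)}[\Phi_m]^k_{(p,q)} \in \Delta$; in particular both $[\Phi_m]^k_{(p,q)}$ and $\lambda[\Phi_m]^k_{(p,q)}$ lie in $\Delta$. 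The non-linear coefficients of $F_0$ sit at the resonant multi-indices $(N,0)$, with value $a_N^0$, and $(N-1,1)$, with value $\lambda b_{N-1}^0$; these are what we must show are integral for all $N \leq m$.

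I induct on $N$, assuming $a_\ell^0$ and $\lambda b_p^0$ lie in $\Delta$ for every $\ell < N$ and $p < N-1$. Expanding the $(N,0)$ coefficient of $\pi_1(F_m \circ \Phi_m) = \pi_1\Phi_m + \sum_{\ell=2}^N a_\ell^0 (\pi_1\Phi_m)^\ell$ (the higher-degree tail of $F_m$ cannot reach degree $N$), the term $\ell=N$ contributes $a_N^0 \cdot 1$, extracted from the leading $x^N$ of $(\pi_1\Phi_m)^N = (x+A)^N$; every other summand is integer by the induction hypothesis and the $\Delta$-integrality of $A$. The parallel expansion at $(N-1,1)$ isolates $\lambda b_{N-1}^0$ with multiplier $1$, coming from $x^{N-1} \cdot y$ in $(\pi_1\Phi_m)^{N-1}(\pi_2\Phi_m) = (x+A)^{N-1}(y+B)$; here one needs the slightly finer bound $\lambda[\Phi_m]^2_{(N-1,1)}\in \Delta$, which is furnished by $\max(1,1)=1$.

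Turning to the left-hand side, the key scaling observation is that since $\tilde F$ is $\Delta$-integral and $|\lambda|>1$, the factorization $\pi_2 F = \lambda(y + \lambda^{-1}\pi_2\tilde F)$ writes $\pi_2 F$ as $\lambda$ times an integer-coefficient series, so $[(\pi_2 F)^q]_{(i,j)} \in \lambda^q \Delta$; combined with the $\Delta$-integrality of $(\pi_1 F)^p$, each summand
\begin{equation*}
[\Phi_m]^k_{(p,q)}\,[(\pi_1 F)^p(\pi_2 F)^q]_{(i,j)}
\end{equation*}
lies in $\lambda^{q-\max(1,q)}\Delta \subseteq \Delta$. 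Both sides of the coefficient equation are thus integral apart from the single isolated term $a_N^0$ (respectively $\lambda b_{N-1}^0$), which is therefore forced into $\Delta$, closing the induction. The principal obstacle is the combinatorial bookkeeping of which multi-indices $(p,q)$ actually contribute to the target $(i,j)$, and verifying that the diagonal leading coefficient is precisely $1$; the structural point is that the weight $\lambda^{\max(1,q)}$ defining $\GG_{\SEMI}$ is calibrated exactly so that the factor $\lambda^q$ produced by composing with the $y$-direction of $F$ is absorbed.
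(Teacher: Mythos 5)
Your proof is correct, but it follows a genuinely different route from the paper's. The paper disposes of the lemma in three lines by exploiting the group structure: since $\GG_{\SEMI}$ is a group (Theorem \ref{group under composition}), $\Phi_m^{-1}\in\GG_{\SEMI}$ as well, so $\lambda\tilde{\Phi}_m^{-1}$ is integral; hence $F\circ\Phi_m^{-1}=(x,\lambda y)+G$ with $G$ integral, and since $\Phi_m(x,\lambda y)-(x,\lambda y)$ is also integral, the entire conjugate $\Phi_m\circ F\circ\Phi_m^{-1}-(x,\lambda y)$ has integer coefficients and agrees with $\tilde F_0$ modulo $\F[m+1]$. You never touch $\Phi_m^{-1}$: instead you read the intertwining congruence $\Phi_m\circ F\equiv F_0\circ\Phi_m \Mod \F[m+1]$ coefficient by coefficient, run a secondary induction on the total degree $N\le m$, and isolate the new resonant coefficient ($a_N^0$ at $(N,0)$, respectively $\lambda b_{N-1}^0$ at $(N-1,1)$) with multiplier exactly $1$, after verifying that every other summand on both sides is integral --- the decisive scaling step being that the factor $\lambda^{q}$ produced by $(\pi_2F)^q$ is absorbed by the weight $\lambda^{\max(1,q)}$ carried by $[\Phi_m]^k_{(p,q)}$. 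The paper's route buys brevity and avoids all coefficient bookkeeping, at the price of invoking the closure of $\GG_{\SEMI}$ under inversion; your route needs only the estimates on $\Phi_m$ itself and makes visible exactly why the weight $\max(1,q)$ is calibrated correctly, though it partially duplicates the bookkeeping that the proposition following the lemma carries out anyway. Both arguments are sound.
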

\begin{proof}

Since $\Phi^{-1}_m \in \GG_{\SEMI}$, we know in particular that $\lambda ( \tilde{\Phi }_{m}^{-1})$ has integer coefficients.  It follows that $(F\circ \Phi _{m}^{-1})(x,y)=(x,\lambda y)+G(x,y)$, where $G$ has integer coefficients.   Since $\Phi_m \in \GG_{\SEMI}$, we have $\Phi_m(x,\lambda y)-(x,\lambda y)$ is integral.  Thus, the function $(\Phi_{m}\circ F\circ \Phi_{m}^{-1})(x,y)-(x,\lambda y)$ has integer coefficients. Since this function agrees with $F_{0}$ modulo $(m+1)$-degree terms, the result is proved.
\end{proof}

\begin{prop} 
Let $F$ be an analytic germ of  the form (\ref{general semi-hyperbolic}) with  $a_{ij}$ and $b_{ij}$ integers. Let $H_{m}$ and $F_0$ be defined as before for all $m\geq n+1$. Then $H_m \in \GG_{\SEMI}$, and $\tilde F_0$ has integer coefficients.
\end{prop}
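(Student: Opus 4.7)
My plan is to prove both conclusions simultaneously by induction on $m \geq 2$ (reading $m \geq n+1$ in the statement as a typo carried from the preceding section), mirroring the structure of the proof of Proposition \ref{attracting c_{ij} estimate}. The driving observation is that in the semi-hyperbolic case the denominators $\lambda_k - \lambda_1^i \lambda_2^j$ appearing in (\ref{Poincare-Dulac formula}) have absolute value exactly $|\lambda|^{\max(1,j)}$ whenever non-zero: by the ultrametric inequality and $|\lambda|>1$ one computes $|1 - \lambda^j| = |\lambda|^j$ for $j \geq 1$, $|\lambda - 1| = |\lambda|$, and $|\lambda - \lambda^j| = |\lambda|^j$ for $j \geq 2$. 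This matches precisely the $\tau$-function defining $\GG_{\SEMI}$ in Proposition \ref{maxes}, which is what makes the argument run.

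For the base case $m=2$, I would check (\ref{Poincare-Dulac formula}) slot by slot. At the two resonant slots --- $(2,0)$ for $k=1$ and $(1,1)$ for $k=2$ --- the convention sets $[H_2]^k = 0$ and forces $[F_0]^k$ to inherit the corresponding (integer) coefficient of $\tilde F$. At each non-resonant slot of degree two, $[H_2]^k$ is an integer coefficient of $\tilde F$ divided by a denominator of absolute value exactly $|\lambda|^{\max(1,j)}$, so the $\GG_{\SEMI}$-bound is immediate.

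For the inductive step, I would assume $H_2, \ldots, H_m \in \GG_{\SEMI}$ and that $\tilde F_0$ has integer coefficients through degree $m$. Since $\GG_{\SEMI}$ is a group by Proposition \ref{maxes}, we have $\Phi_m \in \GG_{\SEMI}$. The proof of Lemma \ref{F_{0} coefficients} in fact establishes something slightly stronger than its statement: that $\tilde F_m = F_m - (x, \lambda y)$ has integer coefficients in full, not just modulo $\F[m+1]$. In particular $[F_m]^k_{(i,j)} \in \Delta$ for every $(i,j)$ with $i+j = m+1$. At each resonant slot of degree $m+1$, I set $[H_{m+1}]^k_{(i,j)} = 0$ and $[F_0]^k_{(i,j)} = [F_m]^k_{(i,j)} \in \Delta$, extending the integrality of $\tilde F_0$ to degree $m+1$. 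At each non-resonant slot, (\ref{Poincare-Dulac formula}) combined with the denominator computation forces $\lambda^{\max(1,j)}[H_{m+1}]^k_{(i,j)} \in \Delta$, which is the $\GG_{\SEMI}$-bound. This closes the induction.

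The main subtlety will be the handoff between the inductive hypothesis and the lemma: the argument needs the full integrality of $\tilde F_m$ visible in the lemma's proof (not merely the weaker mod-$\F[m+1]$ conclusion in its statement) so that the degree-$(m+1)$ coefficients of $F_m$ are available for the PD formula at the next step. Everything else is forced by the ultrametric inequality, which rules out any small-divisor phenomenon so long as $|\lambda| > 1$; this is the structural reason why the semi-hyperbolic resonant case admits an analytic Poincar\'e--Dulac conjugation.
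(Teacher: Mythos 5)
Your proof is correct and follows essentially the same route as the paper's: induction on the degree, the group property of $\GG_{\SEMI}$ to control $\Phi_m$, Lemma \ref{F_{0} coefficients} for the integrality of the numerators, and the exact ultrametric evaluation $|\lambda_k-\lambda_1^i\lambda_2^j|=|\lambda|^{\max(1,j)}$ of the non-vanishing denominators, which matches the $\tau$-function defining $\GG_{\SEMI}$. The only organizational difference is that the paper expands $[\Phi_m\circ F-F_0\circ\Phi_m]^k_{(i,j)}$ term by term (which is where the convention $c_{m+1,0}=d_{m,1}=0$ visibly absorbs the one problematic term) rather than invoking the full integrality of $\tilde F_m$; your observation that the proof of Lemma \ref{F_{0} coefficients} already yields that stronger conclusion is accurate.
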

 
\begin{proof}
We induct on $m$.
Let us first write 
\begin{equation*}
H_{m+1}(x,y)=\left(x+\sum_{i+j=m+1}c_{ij}x^{i}y^{j}, y+\sum_{i+j=m+1}d_{ij}x^{i}y^{j} \right).
\end{equation*}
Recall from the formal algorithm that we choose $c_{m+1,0}=d_{m,1}=0$; this is important for the analysis that follows.
By the formal theory, we have $\Phi _{m+1}\circ F \equiv F_{0}\circ \Phi _{m+1} \mod \F[m+2]$ is satisfied. We will consider the terms of total degree $m+1$ on each side of this equation.

We start by looking at the first components. For any $(i,j)$ with $i+j=m+1$ and $j \neq 0$ (since we
have already defined $c_{m+1,1}=0$), Lemma \ref{useful} yields
\begin{equation}\label{1st LHS}
[\Phi_{m+1}\circ F]^1_{(i,j)}=[\Phi_{m}\circ F]^1_{(i,j)}+c_{ij}\lambda ^{j},
\end{equation}
and
\begin{equation}\label{1st RHS}
\begin{split}
[F_{0}\circ \Phi_{m+1}]^1_{(i,j)} =[\Phi_{m}]^1_{(i,j)}+\sum_{k=2}^{m} a_k^0 \left[(\pi_{1}\Phi_{m})^{k}\right]_{(i,j)}+c_{ij}.
\end{split}
\end{equation}
Equating the two sides gives
\begin{equation}\label{1st}
c_{ij}(1-\lambda ^{j})=[\Phi _{m}\circ F]^1_{(i,j)}-[\Phi_{m}]^1_{(i,j)}-\sum_{k=2}^{m}a_k^0 [(\pi_{1}\Phi_{m})^{k}]_{(i,j)}.
\end{equation}
 
Since $H_i \in \GG_{\SEMI}$ for $i<m+1$ by hypothesis, we have $\Phi_m \in \GG_{\SEMI}$; in particular its coefficients are integral.  Moreover,  $\Phi(x,\lambda y)-(x,\lambda y)$ is integral.  By Lemma \ref{F_{0} coefficients}, we have $a_j^0 \in \Delta$ for $j \leq m$.   Thus the right hand side of (\ref{1st}) is an integer, and it follows that
\begin{equation} \label{combo 1}
\lambda^{\max(1,j)}c_{ij} \in \Delta.
\end{equation}

The proof for the second component is similar. Computing the $(i,j)$-terms for $i+j=m+1$, with $i\neq m$, we have the equations
\begin{equation}\label{2nd LHS}
[\Phi_{m+1}\circ F]^2_{(i,j)}=[\Phi_{m}\circ F]^2_{(i,j)}+d_{ij}\lambda ^{j}.
\end{equation}
and
\begin{equation}\label{2nd RHS}
[F_{0}\circ \Phi_{m+1}]^2_{(i,j)}=\lambda [\Phi_{m}]^2_{(i,j)}+\lambda \sum_{k=1}^m b_k^0 \left[(\pi_{2}\Phi_{m})(\pi_{1}\Phi_{m})^{k}\right]_{(i,j)}+d_{ij}\lambda .
\end{equation}
Setting the two   equal to one another  we obtain
\begin{equation}\label{2nd}
d_{ij}(\lambda -\lambda ^{j})=[\Phi_{m}\circ F]^2_{(i,j)}-\lambda [\Phi_{m}]^2_{(i,j)}- \lambda \sum_{k=1}^m b_k^0 \left[(\pi_{2}\Phi_{m})(\pi_{1}\Phi_{m})^{k}\right]_{(i,j)}
\end{equation}
We argue that the right hand side is again integral.  The first term is integral as in the previous argument.  Let $\phi_k= \pi_k \tilde \Phi_m$.    For $k=1,2$, the polynomials $\lambda \phi_k$ have integral coefficients.  The only possible non-integral term of degree $(i,j)$ in
\begin{equation}
\lambda (y+\phi_2)(x+\phi_1)^k,
\end{equation}
then, is for $k=m$ and thus $(i,j)=(m,0)$.   However, we have already defined $d_{m1}=0$, so the estimate holds trivially in this case.  (Notice the left hand side is $0$.)
It follows that
\begin{equation} \label{combo 2}
\lambda^{\max(1,j)}d_{ij} \in \Delta.
\end{equation}

Combining (\ref{combo 1}) and (\ref{combo 2}) gives the proposition.

\end{proof}

\begin{prop} \label{P-D analytic} If $F \in \F_2$ has eigenvalues $\lambda$ and $1$ with $\lambda$ not a root of unity, then $F$ is analytically equivalent to its PD-normal form.
\end{prop}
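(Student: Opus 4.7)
The plan is to assemble the inductive estimates of the previous proposition into a limit argument. First, by Lemma \ref{q-change} applied after a preliminary diagonal conjugation, I may assume $F$ is in the form (\ref{general semi-hyperbolic}) with all $a_{ij}, b_{ij} \in \Delta$. The Poincar\'{e}-Dulac algorithm then inductively produces the homogeneous pieces $H_m \in \GG[m]$, the partial compositions $\Phi_m = H_m \circ \cdots \circ H_2$, and the coefficients of $\tilde F_0$ (stabilizing degree by degree as $m$ grows). By the previous proposition each $H_m$ lies in $\GG_{\SEMI}$, and since $\GG_{\SEMI}$ is a group (Proposition \ref{maxes}), so does each $\Phi_m$.

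Next I would pass to the limit. Because $H_m - \Id \in \F[m]$, for any fixed $(i,j)$ the coefficient $[\Phi_m]^k_{(i,j)}$ is constant once $m \geq i+j$, so the formal limit $\Phi = \lim_m \Phi_m$ is a well-defined element of $\F_2$. Each of its coefficients inherits the dynamic bound
\begin{equation*}
\lambda^{\max(1,j)} [\Phi]^k_{(i,j)} \in \Delta,
\end{equation*}
which is to say $\Phi \in \GG_{\SEMI}$. Since $|\lambda| > 1$ this forces $|[\Phi]^k_{(i,j)}| \leq 1$ for all $(i,j) \in A_2$. In the non-archimedean setting a power series whose coefficients are bounded in norm by $1$ converges on the open unit polydisc, so $\Phi$ is analytic at $0$. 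The same reasoning applied to $\Phi_m^{-1} \in \GG_{\SEMI}$ yields an analytic inverse $\Phi^{-1}$. Applying Lemma \ref{F_{0} coefficients} at every stage gives that $\tilde F_0$ has integer coefficients, so $F_0$ is likewise analytic.

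Finally, the intertwining $\Phi_m \circ F \equiv F_0 \circ \Phi_m \mod \F[m+1]$ holds at every stage by construction of the $H_m$, so letting $m \to \infty$ produces the formal identity $\Phi \circ F = F_0 \circ \Phi$; since $\Phi$, $\Phi^{-1}$, $F$, and $F_0$ are all analytic on a common polydisc, this is an honest analytic conjugacy. The only delicate step is the passage of the $\tau$-bound through the limit, and this hinges on the two facts that make the dynamic framework work: $\GG_{\SEMI}$ is closed under composition (the deferred Proposition \ref{maxes}), and the integrality condition $\lambda^{\max(1,j)} [G]^k_{(i,j)} \in \Delta$ is visibly stable under coordinate-wise stabilization. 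This is precisely where the non-archimedean hypothesis is cashed in: bounded coefficients together with the ultrametric inequality suffice for convergence, bypassing the small-divisor obstructions one would face over $\C$.
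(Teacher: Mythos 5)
Your proposal is correct and follows essentially the same route as the paper: invoke the preceding proposition to place each $H_m$ (hence each $\Phi_m$, by the group property of $\GG_{\SEMI}$) in $\GG_{\SEMI}$, observe that the coefficients of the stabilizing limit $\Phi$ and of $\Phi^{-1}$ are therefore integral and hence define analytic maps, and use Lemma \ref{F_{0} coefficients} for the analyticity of $F_0$. The only quibble is a citation slip: the group property of $\GG_{\SEMI}$ comes from Theorem \ref{group under composition} (via the dynamic function of Proposition \ref{maxes}), not from Proposition \ref{maxes} itself.
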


\begin{proof}
By the previous proposition, we have $H_m$ and therefore $\Phi_m \in \GG_{\SEMI}$ for all $m$.  It follows that $\Phi$ and $\Phi^{-1}$ lie in $\GG_{\SEMI}$.  In particular, the coefficients of these functions are integers.  Thus $F$ is analytically conjugate to its normal form.  
\end{proof}

\section{Further Normalization}\setcounter{equation}{0}\label{Jenkins!}

The previous section demonstrated that given a map of the form
\begin{equation}\label{semihyperbolic map 2}
F(x,y)=(x+O(2),\lambda y +O(2)),
\end{equation}
where $\lambda \in K$ satisfies $|\lambda |\neq 1$, there is an analytic change of variable $H(x,y)=(x+O(2),y+O(2))$ which conjugates $F$ to the simpler map
\begin{equation}\label{poincare-dulac form 2}
F_{0}(x,y)=(f(x),\lambda y(1+g(x))),
\end{equation}
where $f$ is a one-variable analytic map which is tangent to the identity, and $g$ is an analytic map satisfying $g(0)=0$. We now turn to further normalization, following the theory outlined in Section \ref{Preliminaries}. We will prove a slightly more general result here. Fix $\lambda _{i}\in K$ with $\lambda _{i}\neq 0$ for $i=1,\ldots ,n$; they may or may not possess resonance. Consider an analytic function of the form
\begin{equation}\label{(n+1)-dimensional map}
F(x,y_{1},\ldots ,y_{n})=(f(x),\lambda _{1} y_{1}(1+g_{1}(x)),\ldots ,\lambda _{n}y_{n}(1+g_{n}(x)),
\end{equation}
where $f$ is a one-variable analytic map tangent to the identity and $g _{i}\in K[|x|]_0$.  By Proposition \ref{us, bitches!}, there is a locally-analytic map $h=h(x)$ fixing $0$ so that $h\circ f\circ h^{-1}(x)=f_{m,\rho ,\mu }$, and so by conjugating $F$ in (\ref{(n+1)-dimensional map}) by the change of variable $H$ which is $h$ in the first component and the identity in all other components, we may assume that $f=f_{m,\rho, \mu}$.

Let $L$ be the diagonal map with eigenvalues $(a,1,\ldots ,1)$. An easy check shows that 
\begin{equation*}
\begin{split}
& (L\circ F\circ L^{-1})(x,y_{1},\ldots ,y_{n})\\
& =(a f(x/a ),\lambda _{1}y_{1}(1+g_{1}(x/a )),\ldots ,\lambda _{n}y_{n}(1+g_{n}(x/a ))).\\
\end{split}
\end{equation*}
Thus, by choosing $a$ with sufficiently large norm, we may assume that the coefficients of $f$ and $g_{i}$ are small. In particular, we will assume that all of these coefficients have norm less than or equal to $1$.

Our goal is to conjugate $F$ to the reduced form
\begin{equation}\label{analytic normal form}
F_r(x,y_{1},\ldots ,y_{n})=(f_{m,\rho,\mu}, \lambda y _{1}(1+r_{1}(x)),\ldots ,\lambda _{n}y_{n}(1+r_{n}(x))),
\end{equation}
where $r_{i}(x)$ is the remainder of $g_{i}(x)$ upon division by $x^m$.
 
The conjugation will be by an {\it a priori} formal map 
\begin{equation*}
\Gamma=\lim_{n\to \infty } \Gamma _{n}=\lim_{n\to \infty } (J_{n}\circ J_{n-1}\circ \cdots \circ J_1), 
\end{equation*}
where 
\begin{equation}
J_{i}(x,y_{1},\ldots ,y_{n})=(x,y_{1}(1+c_{i,1}x^{i}),\ldots ,y_{n}(1+c_{i,n}x^{i})).
\end{equation}
The chosen form of the conjugating map is useful. First, note that for any $i=1,2,\ldots $, the inverse $J_{i}^{-1}$ (as well as the inverse $\Gamma _{i}^{-1}$) is easily determined. Moreover, when conjugating any $F$ of the form (\ref{(n+1)-dimensional map}), the coefficients $c_{i,j}$ are used to reduce the analytic function $\pi_{j}F$ for $j=1,\ldots ,n$. The upshot is that we can isolate and eliminate terms of the functions $g_{i}$ individually, and thus, for ease of notation and with no loss of generality, we will assume from now on that $n=1$, i.e. that the map $F=F(x,y)$.

Theorem \ref{Jenkins Form} will be proven once we ascertain that $\Gamma$ is analytic, which is equivalent to proving that the infinite product  $\prod_{i=1}^{\infty}(1+c_ix^i)$ is analytic. The coefficients $c_i$ are determined inductively.  As above, put $\Gamma _{n}=J_{n}\circ \cdots \circ J_{1}$, and let $\Gamma_0$ be the identity.   We choose $c_n$ so that 
\begin{equation}
[ F \circ \Gamma_n -\Gamma_n \circ F_r ]^2_{(m+n-1,1)}=0.
\end{equation}

We will inductively show that $n! \rho ^{n}c_n \in \Delta$. This estimate will also yield a similar estimate for $\Gamma _{n}$, as the following lemma demonstrates:

\begin{lemma}\label{product estimation}
Suppose that $c_i \in K$ satisfy  $i!\rho ^{i}c_i\in \Delta $ for $1 \leq i \leq k$.
Write
\begin{equation}
\prod_{i=1}^k (1+c_ix^i)=1+\sum_{j\geq 1}A_{j}x^{j}.
\end{equation}
Then we have $n!\rho ^{n}A_n \in \Delta $ for all $n\geq 1$.
\end{lemma}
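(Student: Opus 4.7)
The plan is to expand the finite product directly and then use the ultrametric inequality together with a classical integrality fact about multinomial coefficients.

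First I would write
\begin{equation*}
\prod_{i=1}^{k}(1+c_i x^i) = \sum_{S} \left(\prod_{i\in S} c_i\right) x^{\sum_{i\in S} i},
\end{equation*}
where $S$ ranges over the subsets of $\{1,2,\ldots,k\}$. Collecting coefficients of $x^n$ gives
\begin{equation*}
A_n = \sum_{\substack{S \subseteq \{1,\ldots,k\} \\ \sum_{i\in S} i = n}} \prod_{i \in S} c_i.
\end{equation*}
By the ultrametric inequality, it suffices to show that $n!\rho^n \prod_{i \in S} c_i \in \Delta$ for each such subset $S$.

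Next, using the hypothesis $|i!\rho^i c_i| \leq 1$ for each $i$, I would estimate
\begin{equation*}
\left|\, n!\,\rho^n \prod_{i\in S} c_i \,\right| = |n!|\cdot |\rho|^n \cdot \prod_{i\in S}|c_i| \leq |n!| \cdot |\rho|^n \cdot \prod_{i\in S}\frac{1}{|i!|\,|\rho|^i} = \left|\frac{n!}{\prod_{i\in S} i!}\right|,
\end{equation*}
where I used $\sum_{i\in S} i = n$ to cancel the powers of $\rho$.

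The key observation is that since the elements of $S$ are distinct positive integers summing to $n$, the quotient $n!/\prod_{i\in S} i!$ is the multinomial coefficient $\binom{n}{i_1,i_2,\ldots,i_t}$ (with $S=\{i_1,\ldots,i_t\}$) and so lies in $\mathbb{Z}$. Because $K$ is non-archimedean, every rational integer has norm at most $1$, so this quotient is in $\Delta$. This gives $|n!\rho^n \prod_{i \in S} c_i| \leq 1$ for each $S$, and summing via the ultrametric inequality yields $n!\rho^n A_n \in \Delta$, as required.

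There is no real obstacle in this argument; the only nontrivial ingredient is the integrality of the multinomial coefficient, which is standard. The proof does not use any special structure of $K$ beyond the non-archimedean property of its norm, and in particular does not require the characteristic-zero estimate of Proposition \ref{n!}.
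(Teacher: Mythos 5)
Your proof is correct and follows essentially the same route as the paper's: expand $A_n$ into a sum of products $\prod_{i\in S}c_i$ over index sets summing to $n$, apply the hypothesis termwise, and invoke the integrality of the multinomial coefficient $n!/\prod_{i\in S}i!$ together with the ultrametric inequality. Your closing observation that Proposition \ref{n!} is not needed for the lemma itself (only later, to deduce analyticity from the estimate) is also accurate.
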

\begin{proof}
 We can write $A_n=\sum_{|\ul{i}|=n} \alpha _{\ul{i}}c_{\ul{i}}$, where $c_{\ul{i}}=c_{i_{1}}\ldots c_{i_{l}}$ and $\alpha _{\ul{i}} \in \Z$. By hypothesis,
\begin{equation}
i_{1}!\ldots i_{l}!\rho ^{|\ul{i}|}c_{\ul{i}}\in \Delta.
\end{equation}
Since the the quotient $\dfrac{n!}{i_{1}!\ldots i_{l}!}$ is an integer,
we have $n!\rho ^{n}c_{\ul{i}}\in \Delta$, and the lemma follows.
\end{proof}
 
{\it Proof of Theorem \ref{Jenkins Form}.}

We assume that $F$ is in PD-form (\ref{poincare-dulac form 2}), and that the coefficients of $f$ and $g$ are each integral. We now reveal how the coefficients $c_n$ are formed and inductively prove that they satisfy the estimate $n! c_n \in \Delta$ for all $n$.  By Lemma \ref{product estimation} and Proposition \ref{n!}, these estimates will ultimately ensure that the function $\Gamma$ is analytic.
 
We consider the congruence
\begin{equation} \label{save princess}
\pi_2(\Gamma_n \circ F) \equiv \pi_2(F_r \circ \Gamma_n)  \Mod x^{m+n},
\end{equation}
for $n \geq 0$.  This condition governs the choice of $c_n$.

Since $g$ and $r$ are congruent $\Mod x^m$, (\ref{save princess}) holds for $n=0$.
 To determine $c_{n+1}$, we look at the second components of
\begin{equation}
J_{n+1} \circ \Gamma_n \circ F {\text{ and }}F_r \circ J_{n+1} \circ \Gamma_n.
\end{equation}
We choose $c_{n+1}$ so that the following equality holds, $\Mod x^{m+n+1}$:
\begin{equation}\label{junk}
(1+g(x))(1+c_{n+1}(f_{m,\rho,\mu}(x))^{n+1})(1+(\alpha \circ f_{m,\rho,\mu})(x))=
(1+r(x))(1+c_{n+1}x^{n+1})(1+\alpha(x)).
\end{equation}
Here $\alpha(x)=\sum_{j\geq 1}A_{j}x^{j}$ is defined by $\Gamma_n(x,y)=(x,y(1+\alpha(x)))$.

We expand both sides of (\ref{junk}). After subtracting the left side from the right, and considering only the $(m+n)$-degree terms, we solve for $c_{n+1}$.   
First, note that
\begin{equation*}
 (f_{m,\rho,\mu}(x))^{n+1} \equiv x^{n+1} + (n+1) \rho x^{m+n} \Mod x^{m+n+1}.
 \end{equation*}
Because of this, we have 
\begin{equation*}
[c_{n+1} (f_{m,\rho,\mu}(x))^{n+1} ]_{m+n}=(n+1)\rho c_{n+1}.
\end{equation*}
In addition,
\begin{equation*}
[g(x)(f_{m,\rho,\mu}(x))^{n+1}-r(x) x^{n+1}]_{m+n}=0,
\end{equation*}
since $g(x) \equiv r(x) \Mod x^m$.
Since $\alpha$ has no constant term, we have
\begin{equation*}
(\alpha \circ f_{m,\rho,\mu})(x) \equiv \alpha(x) \Mod x^m.
\end{equation*}
Thus,
\begin{equation*}
[c_{n+1} (f_{m,\rho,\mu}(x))^{n+1}(\alpha \circ f_{m,\rho,\mu})(x)-c_{n+1}x^{n+1}\alpha(x)]_{m+n}=0,
\end{equation*}
By combining the above ideas, we conclude
\begin{equation*}
[c_{n+1} g(x) f_{m,\rho,\mu}(x)^{n+1}(\alpha \circ f_{m,\rho,\mu})(x)-c_{n+1}r(x) x^{n+1}\alpha(x)]_{m+n}=0.
\end{equation*}
Therefore the $(m+n)$-coefficient obtained from (\ref{junk}) is the sum of $(n+1)\rho c_{n+1}$ and
\begin{equation}\label{fundamental fact}
[r(x)+\alpha (x)+r(x)\alpha (x)-(g(x)+(\alpha \circ f_{m,\rho,\mu})(x)+g(x)(\alpha \circ f_{m,\rho,\mu})(x))]_{m+n}.
\end{equation}
At this point, we have completed the formal theory; the $(m+n)$-coefficient must be $0$, and this determines $c_{n+1}$ uniquely for each $n \geq 0$. Note that $c_{n+1}$ is well-defined for each $n$, as it depends only on a finite number of terms.  

To settle analyticity, we estimate this coefficient. In light of Lemma \ref{product estimation}, we will prove the estimate 
\begin{equation}\label{c_{n+1} estimate}
|(n+1)!c_{n+1}|\leq 1.
\end{equation}
Since (\ref{fundamental fact}) is equal to $-(n+1)\rho c_{n+1}$, we must show that the product of $n!\rho ^{n} $ with (\ref{fundamental fact}) is integral. 

We will prove this inductively. The reasoning is elementary and mostly combinatorial. We will estimate pieces of (\ref{fundamental fact}) individually; the estimate will hold for the sum of these pieces because our norm is non-archimedean. 

For example, note that the difference $(r(x)-g(x))$ certainly satisfies our estimate, since our coefficients are integral, and both $n!$ and $\rho $ are integral. We will break the remainder of the argument into two steps in order to estimate the remaining terms of (\ref{fundamental fact}).\\ \\
\noindent{\it{Step 1}}\\

Let us consider here the difference $\alpha\circ f_{m, \rho, \mu }-\alpha (x)$. We can write
\begin{equation*}
(\alpha\circ f_{m, \rho, \mu })(x) - \alpha(x)=\sum_{j\geq 1}A_{j}((x+\rho x^{m}+\mu x^{2m-1})^{j}-x^{j}).
\end{equation*}
We will expand the powers on the right-hand side as follows: consider each of the terms 
\begin{equation*}
\binom{j}{a,b,c}x^{a}(\rho x^{m})^{b}(\mu x^{2m-1})^{c}A_j.
\end{equation*} 
Since we are only considering the coefficient of the $(m+n)$-degree term, we must have
\begin{equation}\label{(a,b,c) 1}
a+b+c=j, 
\end{equation}
and
\begin{equation}\label{(a,mb,(2m-1)c) 1}
a+mb+(2m-1)c=m+n.
\end{equation} 
Furthermore, we see that $b$ and $c$ are not simultaneously $0$, or else the term is trivial. Lemma \ref{product estimation} yields
\begin{equation}\label{A_{j} estimate}
j!\rho^{j}A_{j}\in \Delta .
\end{equation}

 There are two possibilities to consider: either $j=n+1$ or $j\leq n$. If $j\leq n$, then it is clear that $|\rho ^{n} |\leq |\rho ^{j}|$ and that $|n!|\leq |j!|$. Thus, because $A_{j}$ satisfies (\ref{A_{j} estimate}), it immediately satisfies the stronger estimate
\begin{equation*}
|n!\rho ^{n}A_{j}|\leq 1.
\end{equation*}
On the other hand, if $j=n+1$, then we must have $(a,b,c)=(n,1,0)$.  This triple corresponds to the single term $(n+1)\rho A_{n+1}x^{m+n}$.  From Lemma \ref{product estimation}, we have
\begin{equation*}
(n+1)!\rho^{n+1}A_{n+1} =n!\rho ^{n}((n+1)\rho A_{n+1})\in \Delta.
\end{equation*} 
Thus, our estimate is satisfied.\\ \\

\noindent{\it{Step 2}}\\

We will now consider the difference $r(x)\alpha (x)-g(x)(\alpha\circ f_{m,\rho,\mu})(x)$. By definition of $r$, we have that $g(x)=r(x)+\tilde g(x)$, where $\tilde g(x)=O(x^{m})$. Using this, we may rewrite this difference as 
\begin{equation*}
r(x)\alpha (x) -r(x)(\alpha \circ f_{m,\rho ,\mu })(x)-\tilde g(x)(\alpha \circ f_{m,\rho,\mu })(x).
\end{equation*}

We begin by analyzing the series $\tilde g(x)(\alpha \circ f_{m,\rho,\mu })(x)$. The $(m+n)$-degree coefficient of this series will be a sum of terms of the form
\begin{equation}\label{2nd difference}
[\tilde g]_{d}[\alpha \circ f_{m,\rho,\mu }]_{m+n-d}.
\end{equation}
Since $[g]_{d}=[\tilde g]_{d}$ for all $d\geq m$, each term in (\ref{2nd difference}) is in turn a sum of integer multiples of terms of the form 
\begin{equation*}
[g]_{d} x^{d}A_{j}x^{a }(\rho x^{m})^b(\mu x^{2m-1})^{c}, 
\end{equation*}
where $[g]_{d}$ is integral, and $d\geq m$. We also have
\begin{equation}\label{(a,b,c) 2}
a+b+c=j,
\end{equation}
but now 
\begin{equation}\label{(a,mb,(2m-1)c,d) 1}
a+mb+(2m-1)c+d=m+n.
\end{equation} 

We use the same techniques as in Step 1. Since $[g]_{d}$ is an integer, we need only show
\begin{equation}\label{Step 2 goal}
n! \rho^n (\rho^b A_j) \in \Delta.
\end{equation}
Again, we already have the estimate (\ref{A_{j} estimate}) for $A_{j}$. Using the fact that $d\geq m$, we can conclude that $j\leq n$, and so our estimate follows as in Step 1.

In similar fashion, we consider $r(x)\alpha (x)-r(x)(\alpha \circ f_{m,\rho,\mu} )(x)$. Writing this out, we obtain a sum of terms which are products of the form
\begin{equation*}
\left[ \sum_{i=1}^{m-1}a_{i}x^{i} \right]_{d} \left[ \sum_{n\geq 1}A_{j}((x+\rho x^{m}+\mu x^{2m-1})^{j}-x^{j}) \right]_{m+n-d}.
\end{equation*}
We look at all expansions of the above product, each of which take the form $a_dx^{d}A_{j}x^{a}(\rho x^{m})^{b}(\mu x^{2m-1})^{c}$. Again 
\begin{equation*}
a+b+c=j,
\end{equation*} 
and
\begin{equation*}
a+mb+(2m-1)c+d=m+n.
\end{equation*}
If $d=0$, the expansion is necessarily trivial. On the other hand, if $1\leq d\leq m-1$, then putting together the equalities above, we see that $j\leq n$. From the estimate (\ref{A_{j} estimate}), the result now follows. \qed \\ \\
 
{\it Proof of Semihyperbolic Case of Theorem \ref{formal->analytic}.}
Suppose that $F$ and $G$ are formally equivalent mappings of the form (\ref{typical}) with the same eigenvalues $\lambda$ and $1$, with $|\lambda |\neq 1$.

By the previous proposition, we may assume that $F$ and $G$ are in PDJ-normal form.  By Corollary \ref{PDJ corollary}, two such maps are formally equivalent if and only if they are analytically equivalent.  \qed

 \section{Dynamic Functions}\setcounter{equation}{0}\label{Dynamic!}

 \subsection{Definition and Examples}
We want to set up estimates for the coefficients of power series, so that if two power series satisfy these estimates, their composition does as well.  For simplicity, let us consider the one-dimensional case.  Suppose we have a sequence of elements $\tau(n) \in K$ for $n \geq t$ and two power series $G(x)=x+\sum_{a \geq t} g_ax^a$ and $H(x)=x+\sum_{b \geq t} h_b x^b$ satisfying $\tau(n)g_n \in \Delta$ and $\tau(n)h_n \in \Delta$ for all $n \geq t$.  For which sequences $\tau(n)$ will the composition $G \circ H$ necessarily satisfy these same estimates?

We have
\begin{equation}
(G \circ H)(x)= H(x)+\sum_{a \geq t} g_a \left(x+\sum_{b \geq t} h_b x^b \right)^a.
\end{equation}
Consider one of these terms for a fixed $a$; we expand it using the multinomial theorem.  A typical term of this expansion involves a choice of a partition $a=a_0+ \cdots + a_s$.  Here the term $x$ is raised to the power $a_0$, and $h_{b_i}x^{b_i}$ is raised to the power $a_i$.  The corresponding monomial is
\begin{equation}
\binom{a}{a_0, \ldots, a_s} g_a \prod_{i >0} h_{b_i}^{a_i}x^c,
\end{equation}
where $c= \sum_{i=0}^s a_ib_i$ with $b_0=1$.  This term will satisfy the $\tau$-estimate if
\begin{equation} \label{instructional}
\tau(c) \binom{a}{a_0, \ldots, a_s} g_a \prod_{i>0} h_{b_i}^{a_i} \in \Delta.
\end{equation}
We already know that 
\begin{equation}
\tau(a) g_a \prod_{i>0}\tau(b_i)^{a_i}  h_{b_i}^{a_i} \in \Delta,
\end{equation}
and so (\ref{instructional}) will follow if
\begin{equation}
\left| \tau(c)  \binom{a}{a_0, \ldots, a_s} \right| \leq \left| \tau(a)\prod_{i>0} \tau(b_i)^{a_i} \right|.
\end{equation}
This last equation is the condition for $\tau$ to be (weakly) dynamic; we now generalize this condition to any dimension.

Recall that $A_t$ is the set of vectors in $\W^r$ with norm greater or equal to a given number $t$.  We will use the notation $\hat e_k$ to denote the standard basis vector which is $1$ at the $k$th component and $0$ at the other components.

\begin{define}  \label{belle} We say that a function $\tau: A_t \ra K^r$ given by $\tau=(\tau_1,\ldots,\tau_r)$, is weakly dynamic if it satisfies
the following property:  Let $\av=(a^{(1)},\cdots, a^{(r)}) \in A_t$, and suppose there is a partition $a^{(k)}=\sum_{i=0}^{s_k} a_i^{(k)}$
for each $k$, with each $a_i^{(k)} \in \W$, and $a_i^{(k)} \geq 1$ for $i>0$.  Choose for each $k=1,\ldots,r$ and $i=1, \ldots, s_k$ a vector $\bv^{(k)}_i \in A_t$, and put $\bv^{(k)}_0=\hat{e_k}$.  Let
\[ \cv= \sum_{k=1}^r \sum_{i=0}^{s_k}  a_i^{(k)}   \bv^{(k)}_i. \]
Then
\[     |\tau_n(\cv)| \left| \prod_{k=1}^r \binom{a^{(k)}}{a_0^{(k)},\ldots,a_{s_k}^{(k)}} \right| \leq |\tau_n(\av)|\cdot \prod_{k=1}^r \prod_{i=1}^{s_k}
 |\tau_k(\bv^{(k)}_i)|^{a_i^{(k)}}. \]
\end{define}

Here is the prime example of a weakly dynamic function:

\begin{prop}\label{factorial}
Let $r=t=1$ and $\tau(n)=n!$.  Then $\tau$ is weakly dynamic.
\end{prop}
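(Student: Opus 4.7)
The plan is to unfold the definition in the case $r = t = 1$ with $\tau(n) = n!$ and reduce it to a concrete integrality claim. Under these hypotheses the condition to verify is: for every $a \geq 1$, every partition $a = a_0 + a_1 + \cdots + a_s$ with $a_i \geq 1$ for $i > 0$, and every choice of positive integers $b_1, \ldots, b_s$ (with $b_0 = 1$), setting $c = a_0 + \sum_{i=1}^s a_i b_i$, we must prove
\begin{equation*}
|c!| \cdot \left| \binom{a}{a_0, a_1, \ldots, a_s} \right| \;\leq\; |a!| \cdot \prod_{i=1}^s |b_i!|^{a_i}.
\end{equation*}

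First I would use the identity $a! = \binom{a}{a_0, \ldots, a_s} \cdot a_0! \cdots a_s!$ in $\mathbf{Z}$. Passing to norms on both sides and cancelling $|a!|$ (which is nonzero in characteristic zero) reduces the desired inequality to the cleaner form
\begin{equation*}
|c!| \;\leq\; |a_0!| \cdot \prod_{i=1}^s |a_i!| \cdot |b_i!|^{a_i}.
\end{equation*}

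The heart of the proof is the claim that the rational number
\begin{equation*}
N \;=\; \frac{c!}{a_0! \cdot \prod_{i=1}^s a_i! \, (b_i!)^{a_i}}
\end{equation*}
lies in $\mathbf{Z}$. Once this is established, the ultrametric property together with the fact that every rational integer has norm at most $1$ in $K$ gives $|N| \leq 1$, and the required inequality follows at once. To see $N \in \mathbf{Z}$, I would factor it as
\begin{equation*}
N \;=\; \binom{c}{a_0,\; a_1 b_1,\; \ldots,\; a_s b_s} \cdot \prod_{i=1}^s \frac{(a_i b_i)!}{a_i! \, (b_i!)^{a_i}},
\end{equation*}
where the first factor is a standard multinomial coefficient and each factor in the product is the classical count of the number of ways to partition an $a_i b_i$-element set into $a_i$ unordered blocks of size $b_i$. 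Equivalently, $N$ counts the set partitions of $\{1, \ldots, c\}$ consisting of one distinguished block of size $a_0$ together with, for each $i \geq 1$, an unordered family of $a_i$ blocks of size $b_i$.

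The proof is essentially combinatorial, and no arithmetic input about $K$ is needed beyond the basic fact that $|\mathbf{Z}| \leq 1$ in any non-archimedean norm. The only step requiring even a moment's thought is recognizing the integrality of $N$, which is what makes $\tau(n) = n!$ exactly the right sequence to govern composition of one-variable power series in this non-archimedean setting.
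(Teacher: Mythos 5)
Your proof is correct and follows essentially the same route as the paper: both reduce the inequality to the integrality of $N = c!/\bigl(a_0!\,\prod_i a_i!\,(b_i!)^{a_i}\bigr)$ and establish it by factoring $N$ as a multinomial coefficient $\binom{c}{a_0,\,a_1b_1,\ldots,a_sb_s}$ times the products $\frac{(a_ib_i)!}{a_i!\,(b_i!)^{a_i}}$, each an integer by the standard block-partition count. Your explicit combinatorial interpretation of $N$ is just a spelled-out version of the ``well-known combinatorial fact'' the paper invokes.
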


\begin{proof}

We must show that, given partitions $a=\sum_{i=1}^s a_i$, positive integers $b_1, \ldots, b_s$, and $b_0=1$, that
\[ \left| \binom{a}{a_0,\ldots,a_s} \tau(\sum_{i=0}^s a_i b_i) \right| \leq \left|\tau(\sum_i a_i) \right| \left|\prod_{i=0}^s \tau(b_i)^{a_i} \right|. \]
(Note that $\tau(b_0)=1$.)

In other words we must show that the product
\[ \binom{a}{a_0,\ldots,a_s} \frac{(\sum_i a_i b_i)!}{a! \prod_i (b_i)!^{a_i}} \]
is in $\Delta$.  In fact we will show that it is in $\Z$.  Since $\prod_i (a_ib_i)!$ divides $(\sum_i a_ib_i)!$, it is enough to show that for every $i$, the quotient
\[ \frac{ (a_ib_i)!}{a_i! (b_i)!^{a_i}} \]
is an integer.  This is a well-known combinatorial fact. 
\end{proof}

\begin{define} \label{beller} We say that a function $\tau: A_t \ra K^r$ is dynamic if in the above situation it satisfies the stronger inequality
\begin{equation} \label{Beller}
     |\tau_n(\cv)| \leq |\tau_n(\av)|\cdot \prod_{k=1}^r \prod_{i=1}^{s_k} |\tau_k(\bv^{(k)}_i)|^{a_i^{(k)}},
     \end{equation}
for each $1 \leq n \leq r$.
\end{define} 
The dynamic functions for this paper are of the form   $\tau(\av)=(\lambda^{t_1(\av)}, \ldots,\lambda^{t_r(\av)} )$ with $|\lambda|>1$.   Then (\ref{Beller}) translates to
 \begin{equation} \label{t dynamic}
t_n(\cv) \leq t_n(\av)+ \sum_{k=1}^r \sum_{i=1}^{s_k}  a_i^{(k)} t_k(\bv^{(k)}_i).
\end{equation}

\begin{prop}\label{maxes2}
Let $r=t=2$ so that $A_t= \{(m,n) \in \W^2 \mid m+n \geq 2 \}$.
Pick $\lambda \in K$ with $|\lambda| \geq 1$.  Put $t(m,n)=\max ( 1,n )$.  For both $k=1,2$, define $\tau_k: A_2 \to K$ via $\tau_k(m,n)=\lambda^{t(m,n) }$.  Then $\tau=(\tau_1,\tau_2)$ is dynamic.
\end{prop}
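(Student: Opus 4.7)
The plan is to unpack Definition \ref{beller} directly. Since both coordinate functions are the same here, $\tau_1 = \tau_2 = \lambda^{t(\cdot,\cdot)}$ with $t(m,n) = \max(1,n)$, and since $|\lambda| \geq 1$, the hypothesized multiplicative inequality (\ref{Beller}) is equivalent, for each $n = 1,2$, to the single additive inequality
\[
t(\cv) \;\leq\; t(\av) + \sum_{k=1}^{2} \sum_{i=1}^{s_k} a_i^{(k)}\, t(\bv_i^{(k)}).
\]
So the entire task reduces to verifying this one estimate.

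Writing $\bv_i^{(k)} = (\beta_i^{(k)}, \gamma_i^{(k)})$ for $i \geq 1$, and observing that $\bv_0^{(1)} = (1,0)$ contributes nothing to the second coordinate while $\bv_0^{(2)} = (0,1)$ contributes $a_0^{(2)}$, the second component of $\cv$ works out to
\[
\cv^{(2)} \;=\; a_0^{(2)} + \sum_{k=1}^{2} \sum_{i=1}^{s_k} a_i^{(k)} \gamma_i^{(k)}.
\]
I would then apply the trivial bounds $a_0^{(2)} \leq a^{(2)} \leq \max(1, a^{(2)}) = t(\av)$ and $\gamma_i^{(k)} \leq \max(1, \gamma_i^{(k)}) = t(\bv_i^{(k)})$, giving
\[
\cv^{(2)} \;\leq\; t(\av) + \sum_{k=1}^{2} \sum_{i=1}^{s_k} a_i^{(k)}\, t(\bv_i^{(k)}).
\]

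To finish, I need $t(\cv) = \max(1, \cv^{(2)})$ rather than $\cv^{(2)}$ on the left. If $\cv^{(2)} \geq 1$, then $t(\cv) = \cv^{(2)}$ and the displayed inequality is exactly what is wanted. If $\cv^{(2)} = 0$, then $t(\cv) = 1$, but the right-hand side already contains $t(\av) = \max(1, a^{(2)}) \geq 1$, so the inequality still holds. There is no real obstacle in this proof; the only subtlety is remembering to separate the degenerate case $\cv^{(2)} = 0$, which would otherwise make the bound $t(\cv) \leq \cv^{(2)}$ fail trivially.
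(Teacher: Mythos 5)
Your proof is correct and follows essentially the same route as the paper's: both reduce the multiplicative condition to the additive inequality $t(\cv)\leq t(\av)+\sum_{k}\sum_{i\geq 1}a_i^{(k)}t(\bv_i^{(k)})$, isolate the $i=0$ contribution $a_0^{(2)}$ to $\cv^{(2)}$, bound it by $a^{(2)}\leq\max(1,a^{(2)})$ together with the termwise bounds $\gamma_i^{(k)}\leq\max(1,\gamma_i^{(k)})$, and dispose of the degenerate case where the left-hand maximum equals $1$. The only cosmetic difference is your word ``equivalent'' for the passage from (\ref{Beller}) to the additive form, which is only an implication in the direction you need when $|\lambda|=1$, but that does not affect the argument.
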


Write $b_i^{(k),2}$ for the second component of $\bv_i^{(k)}$.

\begin{proof}
We must verify (\ref{t dynamic}), which is
\[ \max \left( 1, \sum_{k=1}^2 \sum_{i=0}^{s_k} a_i^{(k)} b_i^{(k),2} \right) \leq \max ( 1, a^{(2)} )+ \sum_{k=1}^2 \sum_{i=1}^{s_k} a_i^{(k)} \max ( 1, b_i^{(k),2} ). \]
This is clear when the left hand side is $1$.  Otherwise it reduces to the inequality $a_0^{(2)} \leq a^{(2)}$, which is true.
\end{proof}

\begin{prop} \label{mixed2}
Let $r=t=2$ so that $A_t= \{(m,n) \in \W^2 \mid m+n \geq 2 \}$.  Pick $\lambda \in K$ with $|\lambda| \geq 1$.
Fix a  number $n \in \N$. Put $t_1(i,j)=i+nj$, and $t_2(i,j)=\max(n,i+nj)$. For $k=1,2$, let $\tau_k(i,j)=\lambda^{t_k(i,j)}$.  Then $\tau=(\tau_1,\tau_2)$ is dynamic.
\end{prop}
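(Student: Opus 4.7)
The plan is to check the dynamic inequality in its logarithmic form (\ref{t dynamic}) separately for $n=1$ and $n=2$; since $|\lambda|\geq 1$, this reduction from (\ref{Beller}) is immediate. The starting computation is to expand $\cv$ component by component. Writing $\bv_i^{(k)}=(b_i^{(k),1}, b_i^{(k),2})$ and using $\bv_0^{(k)}=\hat e_k$, the $\ell$-th coordinate of $\cv$ equals
\[ c^{(\ell)} = a_0^{(\ell)} + \sum_{k=1}^{2}\sum_{i=1}^{s_k} a_i^{(k)}\, b_i^{(k),\ell}, \]
from which one obtains the key identity
\[ t_1(\cv) = c^{(1)} + n c^{(2)} = \bigl(a_0^{(1)} + n a_0^{(2)}\bigr) + \sum_{k=1}^{2}\sum_{i=1}^{s_k} a_i^{(k)}\, t_1(\bv_i^{(k)}). \]

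For the $n=1$ inequality, I would subtract $t_1(\av) + \sum_{k,i} a_i^{(k)}\, t_k(\bv_i^{(k)})$ from the above and observe that the resulting difference is
\[ \bigl(a_0^{(1)} - a^{(1)}\bigr) + n\bigl(a_0^{(2)} - a^{(2)}\bigr) + \sum_{i=1}^{s_2} a_i^{(2)}\bigl(t_1(\bv_i^{(2)}) - t_2(\bv_i^{(2)})\bigr), \]
each summand of which is nonpositive: the first two because $a_0^{(\ell)}\leq a^{(\ell)}=\sum_{j\geq 0} a_j^{(\ell)}$, and the third because $t_2=\max(n,t_1)\geq t_1$. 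For $n=2$ I would split $t_2(\cv)=\max(n, t_1(\cv))$ into cases. If the maximum is realized by $t_1(\cv)$, then the $n=1$ bound combined with $t_2(\av)\geq t_1(\av)$ yields the claim. If the maximum is $n$, then $t_2(\av)\geq n$ alone suffices, and the remaining nonnegative sum on the right only helps.

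The calculation is elementary once the decomposition of $\cv$ is written out, so the main obstacle is purely notational: keeping the four nested indices (the two coordinates $k$, the partition index $i$, and the two components of each $\bv_i^{(k)}$) organized cleanly enough that the cancellation in the $n=1$ identity remains transparent, and that the passage from $t_1$ to $t_2$ is recognized as strengthening only the favorable side of the inequality. Once this bookkeeping is in place, the dynamic property reduces to the two trivial facts $a_0^{(\ell)} \leq a^{(\ell)}$ and $t_1 \leq t_2$.
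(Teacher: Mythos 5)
Your proof is correct and follows essentially the same route as the paper's: for the first component you exploit the linearity of $t_1$ together with $t_1 \leq t_2$ to reduce the inequality to $a_0^{(1)} + n a_0^{(2)} \leq a^{(1)} + n a^{(2)}$, and for the second component you split on whether $\max(n, t_1(\cv))$ is attained by $t_1(\cv)$ or by $n$, exactly as in the paper. The only difference is that you write out the coordinate decomposition of $\cv$ explicitly, which the paper leaves implicit.
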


\begin{proof}
We verify (\ref{t dynamic}), for $k=1,2$.
Let $k=1$.  Note that $t_1(\vv) \leq t_2(\vv)$ for all $\vv$, and $t_1$ is linear.  Thus (\ref{t dynamic}) reduces to
\begin{equation*}
a_0^{(1)}+na_0^{(2) }\leq t_1(\av)=a^{(1)}+na^{(2)},
\end{equation*}
which is clear.
Let $m=2$.  If $t_2(\cv)=t_1(\cv)$, then this follows as in the case $m=1$.  If $t_2(\cv)=n$, then the result follows since then $ t_2(\cv) \leq t_2(\av)$.

\end{proof}
 
Recall the definitions of $\F(\tau)$ and $\GG(\tau)$ from Section \ref{Preliminaries}.  We will now show that if $\tau$ is weakly dynamic, then $\GG(\tau)$ is a group. 

\begin{lemma} \label{dynamic lemma}
Suppose that $\tau$ is weakly dynamic.  If $F \in \F(\tau)$ and $H \in \GG(\tau)$, then $F \circ H \in \F(\tau)$.
\end{lemma}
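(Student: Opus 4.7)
The plan is to expand $F\circ H$ via the multinomial theorem and then verify the $\tau$-estimate summand by summand using the inequality in Definition~\ref{belle}. What needs checking is that for every $n \in \{1,\ldots,r\}$ and every $\cv \in A_t$, the product $\tau_n(\cv)\,[F\circ H]^n_{\cv}$ lies in $\Delta$; the fact that $F\circ H \in \F[t]$ is immediate from $F\in\F[t]$ and $H(0)=0$.

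First I would write $H_k(\mathbf{x}) = \mu_k x_k + \tilde{H}_k(\mathbf{x})$ with $\mu_k \in \Delta$ (since $DH_0 \in \diag(\Delta)$), and, following the notation of Definition~\ref{belle}, set $\bv_0^{(k)} = \hat{e}_k$ so that the linear term $\mu_k x_k$ is treated uniformly with the monomials of $\tilde{H}_k$. For each $\av \in A_t$, the multinomial theorem applied to each factor $H_k^{a^{(k)}}$ produces a sum indexed by partitions $a^{(k)} = a_0^{(k)} + \cdots + a_{s_k}^{(k)}$ (with $a_i^{(k)} \geq 1$ for $i>0$) and by choices of distinct multi-indices $\bv_i^{(k)} \in A_t$ for $i \geq 1$. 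Each such configuration contributes a monomial of multi-degree $\cv = \sum_{k}\sum_{i=0}^{s_k} a_i^{(k)}\bv_i^{(k)}$, and assembling these contributions yields
\begin{equation*}
[F\circ H]^n_{\cv} \;=\; \sum [F]^n_{\av}\prod_{k=1}^r \binom{a^{(k)}}{a_0^{(k)},\ldots,a_{s_k}^{(k)}} \mu_k^{a_0^{(k)}} \prod_{i=1}^{s_k}\bigl([\tilde{H}]^k_{\bv_i^{(k)}}\bigr)^{a_i^{(k)}},
\end{equation*}
the sum ranging over all configurations compatible with the prescribed $\cv$ (only finitely many such configurations exist, by a degree count).

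Because $K$ is non-archimedean, it suffices to check $\Delta$-integrality of $\tau_n(\cv)$ times each individual summand. I would write such a product as four manifestly integral factors: (i)~$\tau_n(\av)[F]^n_{\av}$, integral since $F\in\F(\tau)$; (ii)~$\prod_{k,i}\bigl(\tau_k(\bv_i^{(k)})[\tilde H]^k_{\bv_i^{(k)}}\bigr)^{a_i^{(k)}}$, integral since $\tilde H\in\F(\tau)$; (iii)~$\prod_k \mu_k^{a_0^{(k)}}$, integral since each $\mu_k\in\Delta$; and (iv)~the corrective ratio
\begin{equation*}
\frac{\tau_n(\cv)\prod_{k}\binom{a^{(k)}}{a_0^{(k)},\ldots,a_{s_k}^{(k)}}}{\tau_n(\av)\prod_{k,i}\tau_k(\bv_i^{(k)})^{a_i^{(k)}}},
\end{equation*}
whose norm is at most $1$ precisely by the inequality defining a weakly dynamic function. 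The ultrametric inequality then yields $\tau_n(\cv)[F\circ H]^n_{\cv}\in\Delta$, completing the proof.

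The only real obstacle, in my view, is the bookkeeping in the multinomial expansion: one must arrange the indexing so that the bound in Definition~\ref{belle} matches exactly the combinatorial factors appearing in the coefficient of $x^{\cv}$. Once this matching is set up, the weakly dynamic inequality is precisely engineered to absorb the binomial coefficient together with $\tau_n(\cv)$, and the remainder of the argument reduces to a routine cancellation of the auxiliary scalars $\tau_n(\av)$ and $\tau_k(\bv_i^{(k)})$.
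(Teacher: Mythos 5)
Your proposal is correct and follows essentially the same route as the paper's proof: expand $F\circ H$ by the multinomial theorem, split each summand into the integral factors coming from $F\in\F(\tau)$ and $\tilde H\in\F(\tau)$, and absorb the multinomial coefficients together with $\tau_n(\cv)$ via the weakly dynamic inequality, finishing with the ultrametric inequality. (Your explicit handling of the factors $\mu_k^{a_0^{(k)}}$ from the diagonal linear part is a small detail the paper leaves implicit.)
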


\begin{proof}
We must show that for all $n$, the expression
\begin{equation} \label{gray owl}
\pi_n F \circ (DH_0+\tilde{H})
\end{equation}
is in $I(\tau_n)$.

For simplicity, let $f= \pi_nF \in I(\tau_n)$.  We have
\begin{equation*}
f(\vec{x})=\sum_{\av \in T} [f]_{\av} x_1^{a^{(1)}} \cdots x_r^{a^{(r)}}.
\end{equation*}
Here we use the notation $\av=(a^{(1)},\ldots, a^{(r)})$.
Writing $\lambda_1, \ldots, \lambda_r$ for the eigenvalues of $DH_0$, 
expression (\ref{gray owl}) is now
\begin{equation*}
\sum_{\av \in T} [f]_{\av} (\lambda_1x_1+ \pi_1 \tilde{H})^{a^{(1)}} \cdots (\lambda_rx^r+\pi_r \tilde{H})^{a^{(r)}}.
\end{equation*}
It is enough to show that each term appearing in the product of multinomial expansions is in $I(\tau_n)$. 
A typical term is formed by choosing a partition $a^{(k)}=\sum_{i=0}^{s_k} a^{(k)}_i$ of each exponent.   Here $\lambda_kx_k$ is raised to the power $a^{(k)}_0$.  One also chooses $s_k$ monomials 
\begin{equation}
[\pi_k \tilde{H}]_{\bv^{(k)}_i}x^{\bv^{(k)}_i}
\end{equation}
from the terms of $\pi_k \tilde{H}$, which will be raised to the power $a^{(k)}_i$.
This term is then given by the product of
\begin{equation}
 \prod_{k=1}^r \binom{a^{(k)}}{a_0^{(k)},\ldots,a_{s_k}^{(k)}} 
\end{equation}
with
\begin{equation} \label{new mac}
 [f]_{\av}  \prod_{k=1}^r \prod_{i=1}^{s_k}[\pi_k \tilde{H}]_{\bv^{(k)}_i}^{a_i^{(k)}} x^{\cv},
 \end{equation}
 where
\begin{equation}
 \cv= \sum_{k=1}^r \sum_{i=0}^{s_k}  a_i^{(k)}   \bv^{(k)}_i. 
\end{equation}
Since $f \in I(\tau_n)$ and $\pi_k \tilde{H} \in I(\tau_k)$, we know that the product of 
\begin{equation}
 \tau_n(\av)\cdot \prod_{k=1}^r \prod_{i=1}^{s_k} \tau_k(\bv^{(k)}_i)^{a_i^{(k)}}
 \end{equation}
 with (\ref{new mac}) is integral.  To show that this typical term is in $I(\tau_n)$, we must show that the product of
 \begin{equation}
 \tau_n(\cv) \prod_{k=1}^r \binom{a^{(k)}}{a_0^{(k)},\ldots,a_{s_k}^{(k)}} 
 \end{equation}
 with (\ref{new mac}) is integral.  But this exactly follows from the definition that $\tau$ is weakly dynamic.

\end{proof}

\begin{thm} Suppose that $\tau$ is dynamic.  Then $\GG(\tau)$ is a subgroup of $\GG$. \label{group under composition}
\end{thm}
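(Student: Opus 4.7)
I need to verify that $\GG(\tau)$ is closed under composition and under inversion. Closure under composition follows almost immediately from Lemma \ref{dynamic lemma}: for $H_1, H_2 \in \GG(\tau)$ with $L_i = DH_{i,0}$ and $\tilde H_i = H_i - L_i$, the map $H_1 \circ H_2$ has linear part $L_1 \circ L_2 \in \diag(\Delta)$ and nonlinear part
\begin{equation*}
\widetilde{H_1 \circ H_2} = L_1 \circ \tilde H_2 + \tilde H_1 \circ H_2.
\end{equation*}
The first summand lies in $\F(\tau)$ because the eigenvalues of $L_1$ are units of $\Delta$, and the second summand lies in $\F(\tau)$ by Lemma \ref{dynamic lemma} applied to $\tilde H_1 \in \F(\tau)$ and $H_2 \in \GG(\tau)$.

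For closure under inversion, take $H \in \GG(\tau)$ with linear part $L = DH_0$ and $\tilde H = H - L$, and let $K = H^{-1}$. The linear part of $K$ is $L^{-1} \in \diag(\Delta)$; set $\tilde K = K - L^{-1}$. The relation $H \circ K = \Id$ translates to the recursion
\begin{equation*}
\tilde K = -L^{-1} \circ \tilde H \circ K.
\end{equation*}
I would show $\pi_k \tilde K \in I(\tau_k)$ by induction on the multidegree $\vec a$. Since $\tilde H \in \F[2]$, any monomial in the multinomial expansion of $[\tilde H \circ K]^k_{\vec a}$ that involves a nonlinear coefficient $[\tilde K]^{\ell}_{\vec c}$ of $K$ must have $|\vec c| < |\vec a|$; otherwise the remaining factors of the product would be forced to contribute total degree zero, impossible for $K$.

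The inductive step then runs exactly parallel to the proof of Lemma \ref{dynamic lemma}, with $\tilde H$ playing the role of $F$ and $K$ the role of $H$. I expand $\pi_k(\tilde H \circ K)$ via the multinomial theorem; in a typical term, the factors coming from the linear part of $K$ (those with $\bv^{(k)}_0 = \hat e_k$) are the units $\lambda_k^{-1}$, while the factors $[\tilde K]^{\ell}_{\bv^{(\ell)}_i}$ with $i \geq 1$ are controlled by the inductive hypothesis. Inequality (\ref{Beller}) from the definition of a dynamic function then yields $\tau_k(\vec a)[\tilde K]^k_{\vec a} \in \Delta$, as required.

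The principal obstacle is precisely the inversion step, since Lemma \ref{dynamic lemma} cannot be cited directly: applying it would require $K = H^{-1} \in \GG(\tau)$, which is the very conclusion at stake. The induction above is best understood as unrolling the lemma one multidegree at a time, exploiting the fact that $\tilde H$ vanishes to order $\geq 2$ so that only strictly lower-degree coefficients of $K$ feed into the recursion at each stage.
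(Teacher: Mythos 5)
Your proof is correct, and the composition half coincides with the paper's (the same decomposition $G\circ H = DG_0\circ DH_0 + DG_0\circ\tilde H + \tilde G\circ H$, with Lemma \ref{dynamic lemma} handling the last summand). The inversion half takes a genuinely different, though closely parallel, route. The paper works with the degree-$n$ truncations $H_n$ of the inverse, inducts on $n$, and exploits the near-ring congruence $I\equiv H_{n+1}\circ G \Mod \F[n+2]$ to isolate the homogeneous piece $K_{n+1}$: from $K_{n+1}\circ G\equiv K_{n+1}\circ DG_0$ modulo $\F[n+2]$ and the invertibility of $DG_0$ with unit eigenvalues, it deduces $K_{n+1}\in\F(\tau)$, so Lemma \ref{dynamic lemma} is reused as a black box at each stage. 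You instead solve the functional equation explicitly, obtaining the recursion $\tilde K=-L^{-1}\circ\tilde H\circ K$, and induct on the multidegree of the individual coefficients of $\tilde K$; the well-foundedness of your induction rests on the observation that $\tilde H\in\F[t]$ with $t\geq 2$ forces every nonlinear coefficient $[\tilde K]^{\ell}_{\cv}$ occurring in the degree-$\av$ term to have $|\cv|<|\av|$, which is right. Your version is more self-contained (it exhibits the inverse's coefficients directly and re-runs the multinomial estimate, correctly noting that the unit factors $\lambda_k^{-1}$ are harmless), at the cost of redoing the combinatorics of Lemma \ref{dynamic lemma} inside the induction; the paper's version is shorter because the congruence arithmetic in $\F/\F[n+2]$ lets it quote the lemma verbatim. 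One small point worth making explicit in your write-up: you need $\tilde K\in\F[t]$ so that the multidegrees $\bv^{(\ell)}_i$ drawn from $\tilde K$ lie in $A_t$ as Definition \ref{beller} requires; this follows either from the fact that $\GG[t]$ is a group or directly from your recursion, since $\F[t]$ is a two-sided ideal.
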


\begin{proof}

Let $G,H \in \GG(\tau)$.  We have
\begin{equation}
\begin{split}
G \circ H &= (DG_0+ \tilde{G}) \circ H \\
& \equiv DG_0 \circ H \Mod \F(\tau) \text{, by Lemma \ref{dynamic lemma}} \\
&= D(G \circ H)_0+ DG_0 \circ \tilde{H}. \\
\end{split}
\end{equation}

It is easy to see that $DG_0 \circ \tilde{H} \in \F(\tau)$, and it follows that $G \circ H \in \GG(\tau)$.  

Next, suppose that $G \in \GG(\tau)$, and write $H$ for its inverse in $\GG$.  

For $n \in \N$, we define $H_n \in \F$ via
\begin{equation}
[H_n]_{\av}= \begin{cases}
                         [H]_{\av}  & \text{ if $|\av| \leq n$} \\
                         0 & \text{ otherwise.}
 \end{cases}
\end{equation}
 In other words, we truncate $H$ by only admitting terms of total degree at most $n$.  We will prove by induction that $H_n \in \GG(\tau)$, it certainly follows that $H \in \GG(\tau)$.
 Clearly $H_1=DH_0 \in \GG(\tau)$, and $H_n \in \GG_S$ for all $n$.  
  
 So assume that $H_n \in \GG(\tau)$ for a given $n$.  Let $K_{n+1}=H_{n+1}-H_n$.  We have
 \begin{equation}
 \begin{split}
 I & \equiv H_{n+1} \circ G \Mod \F[n+2] \\
    & =I+ DH_0 \circ \tilde{G} + \tilde{H}_n \circ G+ K_{n+1} \circ G.\\
   \end{split}
   \end{equation}
   Clearly $DH_0 \circ \tilde{G} \in \F(\tau)$, and by Lemma \ref{dynamic lemma}, $\tilde{H}_n \circ G \in \F(\tau)$.  It follows that 
   \begin{equation}
   K_{n+1} \circ G \in \F[n+2]+ \F(\tau).
   \end{equation}
 Note that since all the terms of $K_{n+1} $ have total degree $n+1$, we have $K_{n+1} \circ G \equiv K_{n+1} \circ DG_0 \Mod \F[n+2]$.  It follows that $K_{n+1} \circ DG_0 \in \F[n+2] + \F(\tau)$, and so $K_{n+1}  \in \F[n+2]+\F(\tau)$ and indeed $K_{n+1} \in \F(\tau)$.  It follows that $H_{n+1} \in \F(\tau)$ and we are done by induction.
 \end{proof}

{\bf Remark:} There are other useful one-dimensional dynamic functions.  We mention one which pertains to the work in \cite{Jenkins-Spallone} and \cite{REU}.  Suppose that $f$ is an analytic function of one variable of the form
\begin{equation}
f(x)=x+x^m+\sum_{n=2m-1}^{\infty }a_{n}x^{n}.
\end{equation}
Since $f$ is analytic, there is a $q\in K$, with $0<|q|\leq 1$ so that for all $n\geq 2m-1$, we have $q^{n}a_{n}\in \Delta $. Put $S=\{ m+1, \ldots \}$, and define
\begin{equation}
\sigma(n)=(n+1)+m\left[ \frac{n-2}{m-1}\right] .
\end{equation}
If we write $\tau(n)=q^{\sigma(n)}$, then $\tau$ is dynamic. This function figures prominently in our earlier estimation of formal conjugating power series in one variable. In \cite{REU} other dynamic functions are discussed, which lead to stronger convergence results.

\section{Concluding Remarks} \label{Remarks}
The theorems here show again that in many cases, the formal theory coincides with the analytic theory in the non-archimedean setting, a radical departure from the theory in ${\bf C}^{n}$. Moreover, even in higher dimensions and in the presence of resonance, the estimation of power series remains a useful tool in determining the analyticity of a formal conjugating map $F$.

There are obviously some problems that remain however. The most interesting to the authors is the following: let $F$ be a saddle-hyperbolic map, defined and locally analytic in the neighborhood of some fixed point. In other words, after transporting the problem to $0$, write $F(x,y)=(\lambda _{1}x+O(2),\lambda _{2}y+O(2))$, where $0<|\lambda _{1}|<1<|\lambda _{2}|$. We are interested in the resonant case here, so suppose that some resonance exists.  For the sake of this discussion, let the field be $\Q_{2}$, and let $\lambda _{1}=2$ and $\lambda _{2}=1/2$. Following the Poincar\'{e}-Dulac algorithm, the formal normal form is easily determined; it takes the form
\begin{equation}
F_{0}(x,y)=\left(2x+x\sum_{m=1}^{\infty }a_{m}(xy)^{m}, \frac{1}{2}y+y\sum_{n=1}^{\infty }b_{n}(xy)^{n} \right).
\end{equation}
The problem is then simply put: is $F$ analytically equivalent to $F_{0}$? While it would seem that the techniques of this paper (and specifically, those of Section \ref{semi-hyperbolic maps!}) could be suitably modified to handle this case, the functions which ostensibly govern the growth of the coefficients in the formal conjugating maps are not dynamical. It may be that such maps are simply not analytically conjugated to their formal normal forms, thus providing other examples of formally conjugate maps which are not analytically equivalent.  

Finally, the theory of Section \ref{Dynamic!} is an elementary way to construct tangible, non-trivial subgroups of the set of analytic mappings whose linear part is the identity. These mappings also possess strong estimates dictated by the dynamic functions used to define these groups. In future studies, we hope to understand more fully the structure and properties of these dynamic groups in both one and several dimensions.

\bibliographystyle{plain}

\end{document}